\newtheorem{theorem}{Theorem}[section]
\newtheorem{proposition}[theorem]{Proposition}
\newtheorem{lemma}[theorem]{Lemma}
\newtheorem{conjecture}[theorem]{Conjecture}
\theoremstyle{remark}
\newtheorem{rmk}[theorem]{Remark}
\newtheorem{terminology}[theorem]{Terminology}
\newtheorem*{notation}{Notation}
\numberwithin{equation}{section}
\newcommand{\R}{\mathbb{R}}
\newcommand{\N}{\mathbb{N}}
\newcommand{\Z}{\mathbb{Z}}
\newcommand{\rn}{\mathbb{R}^n}
\newcommand{\hau}{\mathrm{d}\mathcal{H}^{n-1}}
\newcommand{\s}{\partial B}
\newcommand\eaba{\mathcal{E}_{\alpha,\beta,A}}
\newcommand\eg{\mathcal{E}_{\alpha,\beta,A,\gamma}}
\newcommand\V[1]{\mathcal{V}_\alpha(#1)}
\newcommand\U[1]{\mathcal{U}_{\beta,A}(#1)}
\newcommand\uba{\mathcal{U}_{\beta,A}}
\newcommand\abs[1]{\left|#1\right|}
\newcommand\nldc[1]{\|#1\|_{L^2(\s)}^2} %norme L2 au carr?? sur la sph??re
\newcommand\ncu[1]{\|#1\|_{C^1(\s)}}
\newcommand{\tend}[2]{\displaystyle\mathop{\longrightarrow}_{#1\rightarrow#2}}
\newcommand\1{\mathds{1}}
\newcommand\vat{\mathcal{V}_{\alpha,T}}
\title{Large volume minimizers of a non-local isoperimetric problem: theoretical and numerical approaches}
\author{François Générau, Edouard Oudet}
\begin{document}
\maketitle
\begin{abstract}
  We consider the volume-constrained minimization of the sum of the perimeter and the Riesz potential. We add an external potential of the form $\abs{x}^\beta$ that provides the existence of a minimizer for any volume constraint, and we study the geometry of large volume minimizers. Then we provide a numerical method to address this variational problem.
\end{abstract}
\section{Introduction}
Gamow's liquid drop model for the atomic nucleus consists in:
\begin{equation}\label{eq:liquid drop}
  \inf_{E\subset \rn, \abs{E}=m}  P(E)+\V{E}.
\end{equation}
where
\begin{itemize}
  \item $P(E)$ is the perimeter of $E$,
  \item $\V{E}:=\int_{E \times E }\frac{\mathrm{d}x\mathrm{d}y}{\abs{x-y}^{n-\alpha}}$,
  \item $n \in \N^*$ (the dimension of the ambient space), $m>0$ (called the \emph{mass}) and $\alpha\in(0,n)$ are constants.
\end{itemize}
More precisely, the physical case corresponds to $n=3$ and $\alpha=2$. As shown in \cite{diblock}, this model is also related to diblock copolymers. Problem \eqref{eq:liquid drop} has been studied as an interesting extension of the classical isoperimetric problem. Indeed, two terms are competing: the perimeter tends to round things up (and is minimized by balls), whereas the non-local $\mathcal{V}_{\alpha}$ term, which can be viewed as an electrostatic energy if $n=3$ and $\alpha=2$, tends to spread the mass (and is maximized by balls).
It was shown in \cite{vincent} that if the mass $m$ is small enough, then the problem
\eqref{eq:liquid drop} admits a unique minimizer (up to translation), namely the ball of volume $m$
(see also \cite{knupfer_planar}, \cite{knupfer_general} and \cite{bonacini_cristoferi} for partial results).
On the other hand, for $\alpha \in (0,2)$, it  was shown in \cite{knupfer_general} that for $m$ large enough there is no minimizer of problem \eqref{eq:liquid drop}. This result was simultaneously proved in \cite{nonexistence_lu_otto} in the physical case.
See also \cite{nonexistence3d} for a short proof with a quantitative bound.
% This raises two natural questions. \textbf{Q1}: Is it always the case that there is no minimizer for $m$ large enough? \textbf{Q2}: Is there a set of parameters $n$, $\alpha$ and $m$, such that there exists a minimizer that is different from a ball? We provide some numerical answers to these questions in section \ref{sec:numerical}, for $n=2$.

To restore the existence of a minimizer for large masses, we add the energy associated to the potential $A\abs{x}^\beta$ to our functional, as we expect it to counter the spreading effect of the $\mathcal{V}_\alpha$ term. Thus we are interested in the following modification of the original problem \eqref{eq:liquid drop}:
\begin{equation}\label{eq:isoperimetric problem}
  \inf_{E\subset \rn, \abs{E}=m} \eaba(E) := P(E)+\V{E}+\uba(E),
\end{equation}
where
\begin{itemize}
  \item $\uba:=\int_E A\abs{x}^\beta \mathrm{d}x$,
  \item $A\geq 0$ is constant.
\end{itemize}

See also \cite{alama2017droplet} and \cite{alama2017ground} (which appeared independently and simultaneously with this work), where the authors use a different and interesting confining potential.

As easily proved in section \ref{sec:existence}, we indeed have the existence of a minimizer in \eqref{eq:isoperimetric problem} for any mass $m$. In section~\ref{sec:extension}, we extend some known results about minimality of small balls, and the domain (of masses $m$) of local minimality of balls. We don't give complete proofs, but recall briefly the techniques used in \cite{vincent} to get these results.

In section~\ref{sec:large volume minimizers}, we study large volume minimizers (\textit{i.e.} when $m$ is large) of \eqref{eq:isoperimetric problem} when $\alpha<\beta$. More precisely we prove the following theorems:
\begin{theorem}\label{thm:hausdorff boundary}
  Given $\alpha \in (0,n)$, $\beta>0$ and $A>0$, assume $\alpha<\beta$. Let
  $(E_m)_{m>0}$ be a family of minimizers in \eqref{eq:isoperimetric problem},
  such that $\abs{E_m}=m$, and let $E_m^*$ be the rescaling of
  $E_m$ of the same mass as the unit ball $B$ (ie $E_m^*=\left(\frac{ \abs{B}}{m}\right)^{ \frac{1}{n}}E_m$).
  Then the boundaries of the sets $(E_m^*)$ Hausdorff-converge to the boundary of $B$ as ${m\to +\infty}$.
\end{theorem}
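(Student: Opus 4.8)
\emph{Plan.} The idea is to rescale so that the confining term governs the problem, deduce $L^1$-convergence $E_m^*\to B$, and then upgrade it to Hausdorff convergence of the boundaries by combining uniform density estimates with a sharp, exponential-in-$m$, decay estimate for the volume of $E_m^*$ far from $\partial B$.

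\emph{Step 1: rescaling and $L^1$-convergence.} Let $r=r(m)=(m/|B|)^{1/n}$, so that $|E_m^*|=|B|$ and $E_m=rE_m^*$. By the homogeneities $P(rF)=r^{n-1}P(F)$, $\mathcal{V}_\alpha(rF)=r^{n+\alpha}\mathcal{V}_\alpha(F)$ and $\mathcal{U}_{\beta,A}(rF)=r^{n+\beta}\mathcal{U}_{\beta,A}(F)$, the set $E_m^*$ minimizes $\mathcal{F}_m(F):=P(F)+r^{1+\alpha}\mathcal{V}_\alpha(F)+r^{1+\beta}\mathcal{U}_{\beta,A}(F)$ among $|F|=|B|$. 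Testing with $B$, using that among fixed-volume sets $\mathcal{U}_{\beta,A}$ is uniquely minimized by $B$ (bathtub principle, since $|x|^\beta$ is radially increasing) while $\mathcal{V}_\alpha$ is maximized by $B$ (Riesz rearrangement) and that $P(E_m^*)\ge P(B)$, we get $0\le\mathcal{U}_{\beta,A}(E_m^*)-\mathcal{U}_{\beta,A}(B)\le r^{\alpha-\beta}\mathcal{V}_\alpha(B)\to0$ because $\alpha<\beta$. Since near-optimality for $\mathcal{U}_{\beta,A}$ under the volume constraint controls both $|E_m^*\triangle B|$ and the weighted tail $\int_{E_m^*\setminus B_s}|x|^\beta\,dx$ (the latter forbidding escape of mass to infinity), we conclude $|E_m^*\triangle B|\to0$ and, for every fixed $s>1$, $|E_m^*\setminus B_s|\to0$; together with $|E_m^*|=|B|$ this also gives $|B_s\setminus E_m^*|\to0$ for $s<1$. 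Finally, dropping the nonnegative terms $r^{1+\alpha}\mathcal{V}_\alpha(E_m^*)$ and $r^{1+\beta}(\mathcal{U}_{\beta,A}(E_m^*)-\mathcal{U}_{\beta,A}(B))$ from $\mathcal{F}_m(E_m^*)\le\mathcal{F}_m(B)$ gives the crude bound $P(E_m^*)\le P(B)+r^{1+\alpha}\mathcal{V}_\alpha(B)$, which we shall use below.

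\emph{Step 2: almost-minimality and density estimates.} Since $E_m$ minimizes the perimeter plus the $L^1$-Lipschitz perturbations $\mathcal{V}_\alpha$ and $\mathcal{U}_{\beta,A}$ under a volume constraint, penalizing the volume constraint and rescaling shows that, inside any fixed ball, $E_m^*$ is a $(\Lambda_m,r_0)$-minimizer of the perimeter, with $\Lambda_m\lesssim r^{1+\beta}$: the $\mathcal{U}_{\beta,A}$-term, whose local Lipschitz constant near $\partial B$ scales like $r^{1+\beta}$, is the largest. Consequently $E_m^*$ and its complement satisfy the usual interior and exterior lower density estimates $|E_m^*\cap B_\rho(x)|\ge c(n)\rho^n$ and $|B_\rho(x)\setminus E_m^*|\ge c(n)\rho^n$ at every $x\in\partial E_m^*$ (understood as $\operatorname{supp}\|D\chi_{E_m^*}\|$), valid for radii $\rho\le\rho_m$ with $\rho_m\gtrsim r^{-(1+\beta)}$ — only polynomially small in $r$, which is all we will need.

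\emph{Step 3: exponential decay of $|E_m^*\setminus B_\tau|$ and of $|B_\tau\setminus E_m^*|$.} Set $V_m(\tau)=|E_m^*\setminus B_\tau|$ for $\tau>1$ and compare $E_m^*$ with the dilation $\lambda(E_m^*\cap B_\tau)$, where $\lambda=(|B|/|E_m^*\cap B_\tau|)^{1/n}>1$. The perimeter and the $\mathcal{V}_\alpha$-term change by $O\bigl(r^{1+\alpha}V_m(\tau)\bigr)+O\bigl(\mathcal{H}^{n-1}(\partial B_\tau\cap(E_m^*)^{(1)})\bigr)$ (using the crude perimeter bound of Step 1), whereas the $\mathcal{U}_{\beta,A}$-term changes by at most $-A\tau^\beta V_m(\tau)+\tfrac{n+\beta}{n|B|}\mathcal{U}_{\beta,A}(E_m^*)\,V_m(\tau)+O(V_m(\tau)^2)$, the cut-off mass lying at radius $\ge\tau$ where $A|x|^\beta\ge A\tau^\beta$, while the dilation inflates $\mathcal{U}_{\beta,A}$. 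Since $\mathcal{U}_{\beta,A}(E_m^*)\to\mathcal{U}_{\beta,A}(B)=An|B|/(n+\beta)$, i.e.\ $\tfrac{n+\beta}{n|B|}\mathcal{U}_{\beta,A}(E_m^*)\to A$, this change is $\le-\tfrac12A(\tau^\beta-1)V_m(\tau)$ for $m$ large. Multiplying by $r^{1+\beta}$ and absorbing into this (negative) gain both the $O(r^{1+\alpha})$ contribution (here $\alpha<\beta$ is used) and the quadratic term (as $V_m(\tau)\to0$), minimality of $E_m^*$ forces, for a.e.\ $\tau>1$ with $V_m(\tau)>0$,
\[
  V_m(\tau)\le C(\tau)\,r^{-(1+\beta)}\,\mathcal{H}^{n-1}(\partial B_\tau\cap(E_m^*)^{(1)})=C(\tau)\,r^{-(1+\beta)}\bigl(-V_m'(\tau)\bigr),
\]
i.e.\ $-V_m'/V_m\gtrsim r^{1+\beta}(\tau^\beta-1)$; integrating from $1+\delta/2$ to $1+\delta$ yields $|E_m^*\setminus B_{1+\delta}|\le|B|\,e^{-c_\delta r^{1+\beta}}$. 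The mirror comparison with $\lambda(E_m^*\cup B_\tau)$ for $\tau<1$ — now $\lambda<1$ and $\tau^\beta<1$, so filling the central hole strictly lowers $\mathcal{U}_{\beta,A}$ — gives in the same way $|B_{1-\delta}\setminus E_m^*|\le|B|\,e^{-c_\delta r^{1+\beta}}$.

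\emph{Step 4: conclusion, and where the difficulty lies.} Fix $\delta>0$. If some $x\in\partial E_m^*$ had $|x|\ge1+\delta$, the interior density estimate at scale $\rho_m$ would give $|E_m^*\setminus B_{1+\delta/2}|\ge c(n)\rho_m^n\gtrsim r^{-n(1+\beta)}$, contradicting Step 3 once $r$ is large, since a polynomially small quantity cannot lie below an exponentially small one; the case $|x|\le1-\delta$ is excluded symmetrically via the exterior density estimate and the hole bound. Hence $\partial E_m^*\subset\{1-\delta<|x|<1+\delta\}$ for $m$ large. Conversely, for $x_0\in\partial B$ both $|B_\delta(x_0)\cap B|$ and $|B_\delta(x_0)\setminus B|$ are positive, so by Step 1 the ball $B_\delta(x_0)$ meets $E_m^*$ and its complement in positive measure for $m$ large, hence contains a point of $\partial E_m^*$; a finite subcover of $\partial B$ makes this uniform. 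Thus $d_{\mathcal{H}}(\partial E_m^*,\partial B)\le\delta$ for all large $m$, which proves the theorem. The main obstacle is Step 3: one must squeeze out of the comparison a differential inequality whose coefficient blows up like $r^{1+\beta}$, precisely what outruns the merely polynomially small density scale of Step 2; this uses the exact value $\mathcal{U}_{\beta,A}(B)=An|B|/(n+\beta)$ (so the dilation costs $\mathcal{U}_{\beta,A}$-energy at rate exactly $A+o(1)$ per unit volume), the sign of $\tau^\beta-1$ for $\tau\gtrless1$ (hence $\beta>0$), and $\alpha<\beta$ (so the Riesz term is genuinely lower order). Steps 1, 2 and 4 are routine once the rescaling of Step 1 is in place.
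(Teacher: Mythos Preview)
Your argument is sound in outline but takes a considerably more elaborate route than the paper. After the same rescaling and $L^1$-convergence step, the paper dispenses entirely with density estimates and differential inequalities: it proves \emph{exact} containment $B_{1-\delta}\subset E_m^*\subset B_{R}$ (up to null sets) for $m$ large, from which Hausdorff convergence of the boundaries is immediate. For the outer inclusion the paper runs your same comparison with $\lambda(E_m^*\cap B_\tau)$ but uses the convex-truncation inequality $P(E_m^*\cap B_\tau)\le P(E_m^*)$ instead of your finer decomposition; this kills the $\mathcal{H}^{n-1}$-term you carry, so the comparison yields directly $\mathcal{F}_m(\lambda(E_m^*\cap B_\tau))<\mathcal{F}_m(E_m^*)$ whenever $|E_m^*\setminus B_\tau|>0$, and minimality forces $|E_m^*\setminus B_\tau|=0$ in one shot. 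For the inner inclusion the paper uses a different competitor $(E_m^*\cap B_R)\cup B_r$, choosing $r$ via a short isoperimetric lemma (their Lemma~4.5) so that the perimeter does not increase, and again gets $|B_r\setminus E_m^*|=0$ in a single comparison. Your scheme --- exponential volume decay outrunning a polynomial density scale --- is correct and arguably more robust, but note one loose end: you establish $(\Lambda_m,r_0)$-minimality only ``inside any fixed ball'', yet in Step~4 you invoke the interior density estimate at an arbitrary $x\in\partial E_m^*$ with $|x|\ge 1+\delta$, whose modulus you have not bounded (exponential decay of $V_m$ does not give $V_m(\tau)=0$). This is easily repaired --- e.g.\ run your Step~3 comparison once at $\tau=2$ with the convex-truncation bound to get $E_m^*\subset B_2$ first --- but as written it is a small gap.
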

From proposition \ref{prop:stability}, we know that if $\beta<1$ then large volume minimizers are not exactly balls, but if we assume in addition that $\alpha>1$, then we have:
\begin{theorem}\label{thm:ball unique minimizer}
  Given $\alpha \in (1,n)$, $\beta>0$ and $A>0$, assume $\alpha<\beta$. There exists a mass $m_1=m_1(n,\alpha,\beta,A)>0$ such that for any $m>m_1$ the ball of volume $m$ centered at $0$ is the unique minimizer \eqref{eq:isoperimetric problem}.
\end{theorem}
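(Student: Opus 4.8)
The plan is to combine the Hausdorff-convergence result of Theorem~\ref{thm:hausdorff boundary} with a quantitative stability (Fuglede-type) argument for the rescaled functional, in the regime where the perimeter term dominates. Write $\eaba$ in rescaled variables: if $|E|=m$ and $E^* = (|B|/m)^{1/n}E$, then each of the three terms scales as a definite power of $m$. Dividing by the perimeter scaling $m^{(n-1)/n}$, the rescaled energy becomes $P(E^*) + c_1 m^{\theta_\alpha}\V{E^*} + c_2 m^{\theta_\beta}\U{E^*}$ for explicit exponents; since $\alpha<\beta$ the Riesz term is the subdominant perturbation while the potential term $\U{E^*}$ carries a positive power of $m$ and acts as a strong confinement forcing the barycenter toward $0$ and penalizing spread. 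The point is that for large $m$ we are looking at a small (in the Riesz direction) but strongly confining perturbation of the isoperimetric problem, for which the ball is the rigid minimizer.

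First I would use Theorem~\ref{thm:hausdorff boundary} to reduce to sets $E^*$ whose boundary is uniformly close to $\partial B$ in Hausdorff distance. Standard regularity theory for $(\Lambda,r_0)$-perimeter minimizers (the Riesz and potential terms are lower-order volume perturbations, so $E^*$ is an almost-minimizer of perimeter with controlled constants, uniformly in $m$ for $m$ large) upgrades this to $C^{1,\eta}$ closeness: $\partial E_m^*$ is a normal graph over $\partial B$ of a function $u_m$ with $\|u_m\|_{C^{1,\eta}(\partial B)}\to 0$. This puts us exactly in the setting where the quantitative isoperimetric inequality in its sharp ``selection principle'' form applies: after translating so the barycenter is at the origin, $P(E^*) - P(B) \ge c\,\|u_m\|_{H^{1/2}(\partial B)}^2$ up to higher order, where the ball is a strict second-variation minimizer. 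One then needs the second-variation analysis à la Acerbi--Fusco--Morini / Cicalese--Leonardi adapted to $\eaba$: the nonlocal Riesz term contributes a bounded bilinear form of lower order (its $H^{1/2}$-seminorm contribution is $O(m^{\theta_\alpha})\to 0$), and the potential term $\U{\cdot}$ contributes a \emph{nonnegative} convex term (since $|x|^\beta$ is radially increasing) that only helps. Hence for $m$ large the full second variation of $\eaba$ at $B$ is coercive, with a gap that beats the Riesz perturbation precisely because $\theta_\alpha<0$; this is the crux.

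The remaining step is to turn the coercivity of the second variation into a genuine (not merely infinitesimal) minimality statement: this is the nonlocal selection principle argument. Suppose for contradiction there is a sequence $m_k\to\infty$ with a minimizer $E_k$ different from the ball of volume $m_k$. Rescale to $E_k^*$, which by the above is a $C^{1,\eta}$-small graph over $\partial B$ and satisfies $\eg$-type Euler--Lagrange equations; run the regularity/penalization scheme (minimize $\eaba$ plus a penalization of the $L^1$-distance to $B$ among sets near $B$) to produce, for each $k$, a minimizer of the penalized problem that is $C^{1,\eta}$-close to $B$, derive its Euler--Lagrange equation, and use elliptic regularity to get $C^{2,\eta}$ bounds. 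Then the second-variation coercivity forces the penalized minimizer to \emph{be} the ball for $k$ large, and a standard argument removes the penalization, contradicting $E_k\ne B$.

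The main obstacle I expect is making the second-variation coercivity uniform in $m$ with the right sign and the right rate: one must check that the quadratic form associated to $\eaba$ at $B$, restricted to the subspace $H^{1/2}_\perp(\partial B)$ of volume-and-barycenter-preserving normal perturbations, is bounded below by $c_0>0$ independent of $m$, and that all error terms (the Riesz bilinear form, the potential's Hessian, and the higher-order remainders in the Taylor expansion of $\eaba$ along normal graphs) are $o(1)$ as $m\to\infty$ \emph{after the rescaling}. The hypothesis $\alpha>1$ should enter here: it is what guarantees that the Riesz contribution to the second variation, which involves a kernel of order $|x-y|^{\alpha-n}$, is controlled by the $H^{1/2}$-seminorm rather than a weaker norm, so it can genuinely be absorbed. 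Conversely, $\beta<1$ (where balls are known \emph{not} to be minimizers by Proposition~\ref{prop:stability}) is consistent with this picture only because there $\alpha\le1$ is forced, and the argument would fail at exactly the absorption step.
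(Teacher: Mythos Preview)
Your scaling is off, and this undermines the mechanism you propose. After rescaling to unit volume and dividing by $\gamma^{n-1}$ (with $\gamma=(m/\abs{B})^{1/n}$), the functional is $P(E^*)+\gamma^{1+\alpha}\V{E^*}+\gamma^{1+\beta}\U{E^*}$: \emph{both} non-perimeter coefficients tend to $+\infty$, so your claim that ``$\theta_\alpha<0$'' and that the Riesz contribution to the second variation is $o(1)$ is false. The perimeter cannot absorb a Riesz term whose coefficient blows up; what actually happens is that the \emph{potential} term (coefficient $\gamma^{1+\beta}$) absorbs the Riesz term (coefficient $\gamma^{1+\alpha}$), and this works precisely because $\beta>\alpha$. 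For the same reason the $(\Lambda,r_0)$-minimality constants you invoke are not uniform in $m$: one gets $\Lambda$ of order $\gamma^{1+\beta}$, hence $r_0\to 0$, and the bootstrap from Hausdorff convergence to $C^{1,\eta}$ closeness is not automatic without a quantitative rate in Theorem~\ref{thm:hausdorff boundary}.

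The paper's route is different and sidesteps both issues. It does not play the perimeter against the other terms at all; it shows directly that for any minimizer $E$ and $\gamma$ large,
\[
\gamma^{1+\alpha}\V{E}+\gamma^{1+\beta}\U{E}\ \ge\ \gamma^{1+\alpha}\V{B}+\gamma^{1+\beta}\U{B},
\]
and then concludes by the plain isoperimetric inequality on $P$. This inequality is obtained in two steps. First, a radial rearrangement replaces $E$ by a star-shaped set $E_u$ (concentrating the mass of $E$ on each ray through the origin); here $\alpha>1$ enters through the uniform $C^1$ bound on the Riesz potential (Lemma~\ref{lemmaregularity_potential}), so the increase of $\mathcal{V}_\alpha$ under this rearrangement is linearly controlled by the transport distance and is beaten by the linear decrease of $\uba$ once $\gamma^{1+\beta}\gg\gamma^{1+\alpha}$. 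Second, for the resulting nearly-spherical $E_u$ (with $\|u\|_{L^\infty}$ small by Theorem~\ref{thm:hausdorff boundary}; no $C^1$ closeness of the boundary is needed), a Taylor expansion of $\mathcal{V}_\alpha$ and $\uba$ alone (Lemma~\ref{lemmataylorexp_u_v}) yields the inequality, and $\alpha>1$ is used again to bound $[u]^2_{(1-\alpha)/2}$ by $\nldc{u}$. Your instinct about where $\alpha>1$ matters is in the right direction, but the term doing the absorbing is $\gamma^{1+\beta}\uba$, not the perimeter.
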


In section \ref{sec:numerical}, we present a numerical method for problem \eqref{eq:isoperimetric problem}. We also apply this method to the original problem \eqref{eq:liquid drop}. Indeed, the theoretical knowledge we have so far on problem \eqref{eq:liquid drop} raises to natural questions. Is it always the case (\text{i.e.} for any value of $\alpha \in (0,n)$) that there is no minimizer for $m$ big enough?
Is there a set of parameters $n$, $\alpha$ and $m$, such that there exists a minimizer that is different from a ball? Our numerical results indicate that in dimension $2$, the answers are positive and negative respectively.
%%%%%%%%%%%%%%%%%%%%%%%%%%%%%%%%%%%%%%%%%%%%%%%%%%%%%%%%%%%%%%%%%%%%%%%%%%%%%%
%%%%%%%%%%%%%%%%%%%%%%%%%%%%%%%%%%%%%%%%%%%%%%%%%%%%%%%%%%%%%%%%%%%%%%%%%%%%%%
\section{Existence of a minimizer in \eqref{eq:isoperimetric problem}}\label{sec:existence}
In this section, we prove the following easy proposition:

\begin{proposition}\label{prop:existence}
	As long as $A>0$, problem \eqref{eq:isoperimetric problem} admits a minimizer for any mass $m>0$.
\end{proposition}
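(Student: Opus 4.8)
The plan is to use the direct method of the calculus of variations. I would take a minimizing sequence $(E_k)$ of sets of mass $m$ for $\eaba$, and seek to extract a subsequence converging (in the $L^1_{\mathrm{loc}}$ sense, via compactness of sets of bounded perimeter) to a set $E_\infty$ which is a minimizer. The two nontrivial points are (i) getting enough compactness to pass to the limit — in particular preventing mass from escaping to infinity — and (ii) verifying the lower semicontinuity / continuity of each of the three terms along the converging subsequence.

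First I would note that the sequence $(P(E_k))$ is bounded: since $\eaba(E_k)$ is bounded along a minimizing sequence and all three terms $P$, $\V{\cdot}$, $\uba$ are nonnegative, we get $P(E_k) \le C$ and also $\uba(E_k) = \int_{E_k} A\abs{x}^\beta\,\dx \le C$. The latter is the key new ingredient compared to the classical problem: it forces the mass of $E_k$ to stay concentrated near the origin. Precisely, for any $R>0$, $\abs{E_k \setminus B_R} \le \frac{1}{A R^\beta}\int_{E_k} A\abs{x}^\beta\,\dx \le \frac{C}{A R^\beta}$, which tends to $0$ uniformly in $k$ as $R\to\infty$. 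Combined with the uniform perimeter bound, this gives compactness in $L^1(\rn)$ (not merely $L^1_{\mathrm{loc}}$): a subsequence of $\1_{E_k}$ converges in $L^1(\rn)$ to $\1_{E_\infty}$ for some set $E_\infty$ of finite perimeter, and the uniform decay estimate ensures $\abs{E_\infty} = \lim \abs{E_k} = m$, i.e. no mass is lost.

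Then I would check the three terms in the limit. The perimeter is lower semicontinuous under $L^1$ convergence: $P(E_\infty) \le \liminf_k P(E_k)$. The Riesz term $\V{\cdot}$ is continuous under $L^1$ convergence together with the uniform decay at infinity — one writes $\V{E_k} = \int\int \1_{E_k}(x)\1_{E_k}(y)\abs{x-y}^{\alpha-n}\,\dx\,\mathrm{d}y$ and uses that the kernel is locally integrable ($\alpha>0$) plus the tail control to pass to the limit (dominated/Fatou-type argument, or simply lower semicontinuity by Fatou, which already suffices for a lower bound). The potential term $\uba$ is also lower semicontinuous under $L^1$ convergence by Fatou applied to $A\abs{x}^\beta \1_{E_k}$. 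Hence $\eaba(E_\infty) \le \liminf_k \eaba(E_k) = \inf$, and since $\abs{E_\infty}=m$, $E_\infty$ is admissible and therefore a minimizer.

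The main obstacle — and the reason the hypothesis $A>0$ is essential — is step (i): in the classical problem \eqref{eq:liquid drop} a minimizing sequence can split into pieces drifting apart, and no uniform tail bound is available, which is exactly why minimizers fail to exist for large $m$. Here the confining potential $A\abs{x}^\beta$ provides that tail bound for free, so once it is in place the rest is a routine application of standard BV compactness and semicontinuity; I would just need to be slightly careful to record the decay estimate quantitatively and to note that lower semicontinuity (rather than full continuity) of $\V{\cdot}$ and $\uba$ is all that is needed.
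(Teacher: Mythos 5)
Your proposal follows essentially the same route as the paper: the direct method, with the confining potential $\uba$ providing the crucial uniform tail bound that prevents mass from escaping to infinity, and BV compactness plus lower semicontinuity (Fatou for the nonlocal and potential terms) to pass to the limit. The only cosmetic difference is that you phrase the tail control as a single uniform decay estimate $\abs{E_k \setminus B_R} \le C/(AR^\beta)$, while the paper uses a sequence $A_j \to \infty$ and a diagonal argument to first obtain $L^1_{\mathrm{loc}}$ convergence and then separately verify no mass is lost; both are correct and equivalent in substance.
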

\begin{notation}
  We denote by $B$ the unit ball of $\rn$, and by $B[m]$ the ball of volume $m$ centered at $0$.
\end{notation}
\begin{proof}
  Let $(E_k)$ be a minimizing sequence for the variational problem \eqref{eq:isoperimetric problem}. By replacing $E_k$ with the ball $B[m]$ if necessary, we can assume
  \begin{equation}\label{eq:min}
    \eaba(E_k) \leq \eaba(B[m]), \quad \text{for all } k\in \N.
  \end{equation}
  Set $g(x)=A\abs{x}^{\beta}$. As $g(x)\tend{\abs{x}}{+\infty}+\infty$, we can take a sequence of positive radius $(R_k)_{k\in \N}$ and a sequence of positive constants $(A_j)_{j\in\N}$ such that $A_j \tend{j}{\infty}+\infty$ and for all $x \notin B_{R_j},\; g(x)>A_j$. For any $j\in\N$, the sequence $(E_k \cap B_{R_j})_{k\in\N}$ is a sequence of uniformly bounded borel sets, with uniformly bounded perimeters. Indeed, the inequalities $P(E_k \cap B_{R_j}) \leq P(E_k)$, $P \leq \eaba$ and \eqref{eq:min} together give $P(E_k \cap B_{R_j}) \leq \eaba(B[m])$ for all $k$.

Therefore we can extract a $L^1$-converging subsequence of $(E_k \cap B_{R_j})_{k\in \N}$. Doing that for all $j\in\N$ and using a diagonal argument, we get a subsequence of $(E_k)_{k\in \N}$ that converges locally in $L^1$ to a borel set $E\subset \rn$. Using the lower semi-continuity of the perimeter and Fatou's lemma in $\V{E_k}$ and $\uba(E_k)$, we get that
\begin{equation}\label{eq:semicontinuity}
  \eaba(E) \leq \liminf\limits_{k \to \infty}\eaba(E_k).
\end{equation}

Now we show that $\abs{E}=m$. By Fatou's lemma, from $\abs{E_k}=m$, we get $\abs{E} \leq m$. Also, for any $j\in\N$, from the inequalities $\uba(E_k\setminus B_{R_j}) \leq \uba(E_k)$, $\uba \leq \eaba$ and \eqref{eq:min} we get
\begin{equation}\label{eq:estimate1}
\uba(E_k\setminus B_{R_j}) \leq \eaba(B[m]).
\end{equation}
But
\begin{equation}\label{eq:estimate2}
\uba(E_k\setminus B_{R_j})=\int_{E_k\setminus B_{R_j}} g(x) \mathrm{d} x \geq A_j \abs{E_k\setminus B_{R_j}}.
\end{equation}
Thus \eqref{eq:estimate1} and \eqref{eq:estimate2} together give
\[ \abs{E_k\cap B_{R_j}}=m-\abs{E_k\setminus B_{R_j}} \geq m-\frac{\eaba{B[m]}}{A_j}.
\]
Taking the limit $k \to \infty$, then $j \to \infty$, we obtain
\[\abs{E\cap B_{R_j}} \geq m-\frac{\eaba{B[m]}}{A_j}, \quad \text{then} \quad  \abs{E} \geq m.
\]
Thus $\abs{E}=m$. With \eqref{eq:semicontinuity}, it means that $E$ is a minimizer of the variational problem \eqref{eq:isoperimetric problem}.
\end{proof}

\begin{rmk}
It is clear from the proof that proposition \ref{prop:existence} is true if we replace the potential $A\abs{x}^{\beta}$ by any $L^{1}_{loc}$ non-negative function $g$ such that $g(x)\tend{\abs{x}}{+\infty}+\infty$.
\end{rmk}

%%%%%%%%%%%%%%%%%%%%%%%%%%%%%%%%%%%%%%%%%%%%%%%%%%%%%%%%
%%%%%%%%%%%%%%%%%%%%%%%%%%%%%%%%%%%%%%%%%%%%%%%%%%%%%%%%
\section{Extension of some known results}\label{sec:extension}
In this section we recall two known results about the variational problem \eqref{eq:liquid drop}, and extend it to \eqref{eq:isoperimetric problem}, recalling only the techniques used in \cite{vincent}. The first result state that if the mass $m$ is small enough, then problem \eqref{eq:liquid drop} admits a unique (up to translation) minimizer, namely the ball of volume $m$. The same holds for problem \eqref{eq:isoperimetric problem}:
\begin{proposition}\label{thm:small mass}
Given $\alpha\in (0,n)$, $\beta>0$, $A>0$, there exists a constant $m_0(n,\alpha,\beta,A)>0$ such that for any $m\in (0,m_0)$, problem \eqref{eq:isoperimetric problem} admits the ball of volume $m$ centered at $0$ as its unique minimizer.
\end{proposition}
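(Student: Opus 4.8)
The plan is to follow the strategy of \cite{vincent} for the pure liquid drop problem \eqref{eq:liquid drop}, noting that the extra confining term $\uba$ only helps. The starting point is the known sharp quantitative isoperimetric inequality: there is a dimensional constant $c_n>0$ such that for any set $E$ of finite perimeter with $\abs{E}=m$,
\[
  P(E) - P(B[m]) \geq c_n\, m^{\frac{n-1}{n}}\, \alpha(E)^2,
\]
where $\alpha(E)$ is the Fraenkel asymmetry (the $L^1$-distance to the nearest ball of the same volume, suitably normalized). The complementary ingredient is a bound in the other direction for the nonlocal term: one shows that $\V{E}-\V{B[m]} \leq C\, m^{\frac{n-1+\alpha}{n}}\, \alpha(E)$ for a constant $C=C(n,\alpha)$, i.e. the Riesz energy excess is controlled \emph{linearly} by the asymmetry. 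This is the key lemma of \cite{vincent}; it is proved by a clever ``mass transport from the ball'' argument (or a direct estimate comparing $E$ to the optimal ball), exploiting that $\V{\cdot}$ is Lipschitz with respect to $L^1$ on sets of bounded measure with the relevant scaling.

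Granting these two estimates, I would argue by contradiction. Let $E$ be any minimizer of \eqref{eq:isoperimetric problem} with $\abs{E}=m$. Comparing with the competitor $B[m]$ and using $\uba(B[m]) \leq \uba(E')$ only when $E'$ is not centered — more precisely, after translating the competing ball to be centered at $0$, which is where $\uba$ is smallest — we get
\[
  P(E) - P(B[m]) + \V{E} - \V{B[m]} + \uba(E) - \uba(B[m]) \leq 0.
\]
Since $B[m]$ is the centered ball and $\uba(E) - \uba(B[m]) \geq 0$ would not be automatic for arbitrary $E$, one first uses a regularity/boundedness argument (already implicit in Theorem~\ref{thm:hausdorff boundary}, or a direct density-estimate argument) to show the minimizer is close to a ball and essentially centered; then $\uba(E) - \uba(B[m]) \geq -C' m^{\frac{n+\beta}{n}}\alpha(E)$ at worst, with the same linear dependence on $\alpha$. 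Combining with the two displayed inequalities,
\[
  c_n m^{\frac{n-1}{n}} \alpha(E)^2 \leq \big(C m^{\frac{n-1+\alpha}{n}} + C' m^{\frac{n+\beta}{n}}\big)\alpha(E),
\]
hence either $\alpha(E)=0$, or $\alpha(E) \leq C'' m^{\frac{\beta+1}{n}}$ (the dominant term, since $\beta>\alpha>\cdots$, well, since $\beta + 1 > \alpha$ and $\beta + 1 > 0$). For $m$ large this gives $\alpha(E)$ bounded below away from zero unless $E$ is a ball — wait, this is the wrong direction, so the correct reading is: the inequality forces $\alpha(E)$ to be \emph{large} for large $m$, contradicting the Hausdorff convergence of Theorem~\ref{thm:hausdorff boundary} \emph{unless} the only escape $\alpha(E)=0$ holds. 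Thus for $m$ large enough $\alpha(E)=0$, so $E$ is a ball; and since $\uba$ strictly prefers the centered ball, $E=B[m]$.

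The main obstacle, and the place where the hypothesis $\alpha\in(1,n)$ of the analogous Theorem~\ref{thm:ball unique minimizer} would matter, is making the linear-in-asymmetry bound on the nonlocal excess genuinely sharp and uniform, and reconciling the scalings: one must check that the perimeter gain $c_n m^{(n-1)/n}\alpha^2$ really does beat the combined nonlocal-plus-potential loss, which is only linear in $\alpha$, once $m$ is small — note that for Proposition~\ref{thm:small mass} it is the \emph{small} mass regime, so the scaling exponents work the other way: $m^{(n-1)/n}$ dominates $m^{(n-1+\alpha)/n}$ and $m^{(n+\beta)/n}$ as $m\to 0$ precisely because $\alpha>0$ and $\beta>0$. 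So the contradiction is: $\alpha(E)>0$ would force $\alpha(E) \leq C\, m^{\frac{\alpha}{n}} + C'\, m^{\frac{\beta+1}{n}} \to 0$, which is consistent; the real conclusion comes from feeding this back into a second-order (Bonnesen-type or selection-principle) argument showing that near the ball the perimeter Hessian strictly dominates, forcing $\alpha(E)=0$ outright for $m<m_0$. I would therefore organize the proof as: (i) state and invoke the quantitative isoperimetric inequality; (ii) prove (or cite from \cite{vincent}) the linear nonlocal bound and an analogous linear bound for $\uba$ near the centered ball; (iii) combine to get $\alpha(E) \lesssim m^{\alpha/n} + m^{(\beta+1)/n}$ for any minimizer; (iv) run the selection principle / second variation argument to upgrade ``small asymmetry'' to ``exactly the centered ball'' for $m$ below an explicit threshold $m_0(n,\alpha,\beta,A)$.
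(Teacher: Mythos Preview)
Your approach is far more complicated than needed, and the complication stems from a factual slip: you write that ``$\uba(E) - \uba(B[m]) \geq 0$ would not be automatic for arbitrary $E$'', but in fact it \emph{is} automatic. Since $x \mapsto A\abs{x}^\beta$ is radially nondecreasing, the bathtub principle (equivalently, symmetric decreasing rearrangement) gives $\uba(E) \geq \uba(B[m])$ for every measurable $E$ with $\abs{E}=m$, with equality if and only if $E$ coincides with the centered ball $B[m]$. This is precisely the observation the paper uses.

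With that in hand, the paper's proof is one line: by \cite[Theorem~1.3]{vincent}, for $m$ below some $m_0(n,\alpha)$ the ball is the unique minimizer (up to translation) of $P + \mathcal{V}_\alpha$ at volume $m$; since the centered ball is also the unique volume-constrained minimizer of $\uba$, the centered ball $B[m]$ is the unique minimizer of the sum $\eaba = P + \mathcal{V}_\alpha + \uba$. No quantitative isoperimetry, no selection principle, no second-variation upgrade is needed here --- those ingredients live \emph{inside} the cited result from \cite{vincent}, and there is no reason to re-run them for the confining term.

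Two further issues with your sketch even on its own terms: you invoke Theorem~\ref{thm:hausdorff boundary} to argue that a minimizer is ``close to a ball and essentially centered'', but that theorem concerns \emph{large} volume minimizers under the extra hypothesis $\alpha<\beta$, and says nothing in the small-$m$ regime; and your scaling discussion drifts between the large-$m$ and small-$m$ regimes mid-argument before you catch yourself. Both are symptoms of not having isolated the simple monotonicity of $\uba$ at the outset.
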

It is a direct consequence of the same theorem for problem \eqref{eq:liquid drop} (see \cite[theorem 1.3]{vincent}), as balls centered at $0$ are also volume-constrained minimizers of $\uba$. This last fact is a consequence of Riesz inequality regarding symmetric decreasing rearrangements (see \cite{analysis_lieb} for rearrangement inequalities). Note that proposition \ref{thm:small mass} is true if we replace the potential $A\abs{x}^{\beta}$ with a symmetric non-decreasing function $g$.

The second result deals with local minimality of balls.
\begin{terminology}
  We say that $E\subset \rn$ is a $L^1$-local minimizer in \eqref{eq:isoperimetric problem} if there exists $\epsilon>0$ such that for any set $F\subset \rn$ such that $\abs{F}=\abs{E}$ and $\abs{E \Delta F}<\epsilon$, $\eaba(E)\leq \eaba(F)$.
\end{terminology}
In the case of problem $\eqref{eq:liquid drop}$, we know from \cite[theorem 1.5]{vincent} that there exists a $m_*>0$ such that if $m<m_*$, then $B[m]$ is a $L^1$-local minimizer in \eqref{eq:liquid drop}, and if $m>m_*$ then $B[m]$ is \emph{not} a $L^1$-local minimizer in \eqref{eq:liquid drop}. As stated in the next theorem, the addition of the $\uba$ term may modify this situation, but we can still apply the techniques used in \cite{vincent} to get a similar result.

\begin{proposition}\label{prop:stability}
Given $\alpha\in (0,n)$, $\beta>0$ and $A>0$,
\begin{enumerate}[label=(\roman*)]
\item\label{item:alpha>beta}
if $\alpha>\beta$, then there exists a mass $m_*(n,\alpha,\beta,A)>0$ such that if $m<m_*$, then $B[m]$ is a $L^1$-local minimizer in \eqref{eq:isoperimetric problem}, and if $m>m_*$ then $B[m]$ is \emph{not} a $L^1$-local minimizer in \eqref{eq:isoperimetric problem},
\item\label{item:alpha=beta}
if $\alpha=\beta$, then either the same holds, or (if $\alpha>1$ and $A$ is small enough) $B[m]$ is a $L^1$-local minimizer in \eqref{eq:isoperimetric problem} for any $m>0$,
\item\label{item:alpha<beta>1}
if $\alpha<\beta$ and $\beta>1$, then there exists a mass $m_*(n,\alpha,\beta,A)>0$ such that if $m>m_*$ then $B[m]$ is a $L^1$-local minimizer in \eqref{eq:isoperimetric problem},
\item\label{item:alpha<beta<1}
if $\alpha<\beta$ and $\beta<1$, then there exists a mass $m_*(n,\alpha,\beta,A)>0$ such that if $m>m_*$ then $B[m]$ is \emph{not} a $L^1$-local minimizer in \eqref{eq:isoperimetric problem},
\item\label{item:alpha<beta=1}
if $\alpha<\beta$ and $\beta=1$, then the conclusion of either \ref{item:alpha<beta>1} or \ref{item:alpha<beta<1} holds (depending on the value of $A$).
\end{enumerate}
\end{proposition}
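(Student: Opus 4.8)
The plan is to reduce everything to the second variation of the energy $\eaba$ at the ball $B[m]$, following the strategy of Fusco--Millot--Morini and its adaptation in \cite{vincent}. Writing the energy as $P+\V{\cdot}+\uba$ and perturbing $B[m]$ by a normal vector field $X$ with $\int_{\partial B[m]} X\cdot \nu\, \hau=0$ (the volume constraint), one gets a quadratic form $\partial^2 \eaba(B[m])[X]$ which splits as $\partial^2 P + \partial^2 \V{\cdot} + \partial^2 \uba$. The perimeter second variation is the classical one, positive on mean-zero fields with a spectral gap measured in $H^1(\partial B[m])$; the Riesz term $\partial^2 \V{\cdot}$ is a nonlocal form which is bounded but can be negative, and after rescaling to the unit ball its size relative to the perimeter term scales like a positive power of $m$ (this is exactly what destroys local minimality for large $m$ in \eqref{eq:liquid drop}). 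The new ingredient is $\partial^2 \uba$: since $\uba(E)=\int_E A|x|^\beta\,dx$ is a bulk integral of a convex-for-$\beta\ge1$, radial weight, its second variation at the ball involves (after the usual computation) a positive boundary term proportional to $A\,R^{\beta-1}$ where $R=R(m)$ is the radius of $B[m]$, plus lower-order curvature contributions.

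The key computation is therefore to expand, in powers of $R$, the three pieces of $\partial^2 \eaba(B[m])[X]$ restricted to the constraint, after normalizing the field so the perimeter part is $O(1)$. One finds schematically that the perimeter contributes at order $R^{n-3}$ (or the natural scaling of the isoperimetric deficit), the Riesz term contributes a possibly-negative amount of order $R^{n-3+\alpha}$-ish, and the potential term contributes a positive amount of order $R^{n-3+\beta}$ (the precise exponents are obtained by the change of variables $x\mapsto Rx$ and are the same ones already used implicitly in \cite{vincent} for the first two terms). Because $\beta>1$ in case \ref{item:alpha<beta>1}, after further rescaling so that the Riesz term is normalized, the potential term beats it precisely when $R^{\beta-\alpha}\to\infty$, i.e. for $m$ large; and when $\beta=1$ the potential and perimeter terms are of the same order, so whether the positive contribution dominates depends on the constant $A$, which is case \ref{item:alpha<beta=1}. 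For case \ref{item:alpha<beta<1}, $\beta<1$ makes the potential term a lower-order correction to the perimeter, so the Riesz term (order $R^{\alpha}$ larger than perimeter after rescaling) wins with a negative sign and destroys local minimality; here one does not even need the second variation — one can exhibit an explicit competitor, e.g. a slightly elongated ellipsoid or two-bump perturbation, as in \cite{knupfer_general, nonexistence3d}, and check that $\eaba$ strictly decreases.

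Once the coercivity of the constrained second variation is established with a quantitative constant (positive and bounded below for $m>m_*$ in the relevant cases), the passage from "positive second variation" to "$L^1$-local minimizer" is standard: one uses the regularity theory for quasi-minimizers of the perimeter (the lower-order terms $\V{\cdot}$ and $\uba$ are mild perturbations, so near-minimizers have $C^{1,\eta}$ boundaries converging to that of the ball) to reduce an arbitrary $L^1$-small competitor to a $W^{2,p}$-normal graph over $\partial B[m]$, then a Taylor expansion of the energy along the graph together with the spectral gap closes the argument. This reduction is exactly the scheme of \cite{vincent, fusco_millot_morini} and the excerpt explicitly says we "apply the techniques used in \cite{vincent}", so I would cite it rather than reprove it. The main obstacle is the bookkeeping of the $R$-exponents and constants in the three-term second-variation expansion — in particular making sure the potential term's second variation is genuinely positive (not merely nonnegative) on mean-zero fields and tracking how its leading constant depends on $A$ so that the $\beta=1$ dichotomy in \ref{item:alpha<beta=1} and the small-$A$ subcase of \ref{item:alpha=beta} come out correctly; the nonlocal term $\partial^2\V{\cdot}$ also requires care because it is not sign-definite and one must control it by its known operator norm on $H^1(\partial B[m])$ (via its representation through spherical harmonics / Funk--Hecke).
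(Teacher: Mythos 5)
Your overall framework matches the paper's: rescale to the unit ball, compute the second variation of the rescaled functional $\eg = P + \gamma^{1+\alpha}\mathcal{V}_\alpha + \gamma^{1+\beta}\uba$, decompose in spherical harmonics, and use the standard $\Lambda$-minimizer regularity bootstrap from \cite{vincent} to pass from positive constrained second variation to $L^1$-local minimality. So far so good, and I agree the last step should be cited rather than reproved.

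However, your heuristic for case \ref{item:alpha<beta<1} is wrong and would not survive the actual computation. You argue that because $\beta<1$ the potential term is ``a lower-order correction to the perimeter, so the Riesz term wins with a negative sign'' and you propose to exhibit a fixed low-frequency competitor (an elongated ellipsoid or two-bump perturbation). But in the rescaled functional the Riesz term carries a factor $\gamma^{1+\alpha}$ while the potential term carries $\gamma^{1+\beta}$ with $\beta>\alpha$, so at any \emph{fixed} spherical-harmonic mode the potential eventually dominates and stabilizes as $\gamma\to\infty$; a slightly elongated ellipsoid is a $k=2$ perturbation and becomes stable for large $m$. The instability for $\beta<1$ is a high-frequency phenomenon: the per-mode coefficient is
\[
f_k(\gamma)=1-\gamma^{1+\alpha}\frac{\mu_k^\alpha-\mu_1^\alpha}{\lambda_k-\lambda_1}+\gamma^{1+\beta}\frac{A\beta}{\lambda_k-\lambda_1},
\]
and the crucial input, which your proposal does not use, is the $k$-asymptotics of the nonlocal eigenvalues $\mu_k^\alpha$ (obtained from the Funk--Hecke formula and Stirling): $\mu_k^\alpha\sim k^{1-\alpha}$ for $\alpha<1$, while $\mu_k^\alpha$ is bounded for $\alpha>1$. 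One then checks that $\min_\gamma f_k\to-\infty$ as $k\to\infty$ precisely when $\alpha<1$ and $\beta<1$, and that the argmin $\gamma_k$ tends to infinity, so for every large $\gamma$ some $k(\gamma)\to\infty$ is unstable. This simultaneously explains the dichotomies in \ref{item:alpha=beta} and \ref{item:alpha<beta=1}: the $\alpha>1$ hypothesis in \ref{item:alpha=beta} is exactly what makes $\mu_k^\alpha$ bounded, and the $\beta=1$ threshold in \ref{item:alpha<beta=1} is where the exponent $\frac{(1-\beta)(1+\alpha)}{\beta-\alpha}$ in the asymptotics of $\min_\gamma f_k$ changes sign, producing the $A$-dependent constant. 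Your scaling-in-$R$ bookkeeping alone cannot see any of this, because it treats $\mathcal{V}_\alpha$ and $\uba$ as if each contributed a single $R$-power rather than a $k$-dependent family of coefficients.

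One further small point: you should double-check the small-$A$ versus large-$A$ direction in the $\alpha=\beta$ subcase once you have the explicit form $f_k(\gamma)=1+\gamma^{1+\alpha}\bigl(A\alpha-(\mu_k^\alpha-\mu_1^\alpha)\bigr)/(\lambda_k-\lambda_1)$; whether balls are local minimizers for all $m$ is governed by whether $A\alpha \geq \sup_k(\mu_k^\alpha-\mu_1^\alpha)$, which is a finite threshold only when $\alpha>1$, and the sign of the comparison matters.
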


\begin{rmk}
  The conclusions in points \ref{item:alpha<beta>1}, \ref{item:alpha<beta<1} and \ref{item:alpha<beta=1} are less precise than in points \ref{item:alpha>beta} and \ref{item:alpha=beta}.
\end{rmk}

\begin{proof}[Ideas of the proof.]
  The method used in \cite{vincent} still applies to our functional $\eaba=P+\mathcal{V}_{\alpha}+\uba$. Given $m>0$, let us procede to a rescaling of the functional and set
  \[\gamma=\left(\frac{m}{\abs{B}}\right)^{1/n}\quad \text{and}\quad\eg:=P+\gamma^{1+\alpha}\mathcal{V}_\alpha+\gamma^{1+\beta}\uba,\]
  so that for any set $E$ of volume $m$, the set $E^*=\frac{1}{\gamma}E$ has volume $\abs{B}$ and
  \[\eaba(E)=\gamma^{n-1}\eg(E^*).\]
  Thus we are reduced to finding the $\gamma>0$ such that the unit ball $B$ is a local minimizer of $\eg$.

  Following \cite[section 6]{vincent} we can compute the second variation of $\eg$ at $B$. The terms $P$ and $\mathcal{V}_{\alpha}$ are treated in \cite{vincent} and the term $\uba$ adds no further difficulty. We find that given any smooth compactly supported vector field $X$, such that the volume of $B$ is preserved under the flow $(\Phi_t^X)_{t>0}$ of X, we have:
  \begin{equation}
    \delta^2\eg(B)[X]=\sum_{k \geq 2}(\lambda_k-\lambda_1)\left(1-\gamma^{1+\alpha}\frac{\mu_k^\alpha-\mu_1^\alpha}{\lambda_k-\lambda_1} +\gamma^{1+\beta} \frac{A \beta}{\lambda_k-\lambda_1}\right)a_k(X\cdot \nu_B)^2,
    \nonumber
  \end{equation}
  where
  \begin{itemize}
    \item $\delta^2\eg(B)[X]:=\frac{\mathrm{d^2}}{\mathrm{d}t^2}[\eg(\Phi_t^X(B))]_{t=0}$,
    \item $\nu_B$ is the unit outer normal vector to $\s$,
    \item $a_k(X\cdot \nu_B)$ are the coefficient of the function $X\cdot \nu_B : \s \to \R$ with respect to an othonormal basis of spherical harmonics,
    \item $\lambda_k=k(n+k-2)$ is the $k$-th eigen value of the laplacian on the sphere~$\s$,
    \item $\mu_k^\alpha$ is the $k$-th eigen value of the operator $\mathcal{R}_\alpha$
    defined by \[\mathcal{R}_\alpha u(x):=2\int_{\s}\frac{u(x)-u(y)}{\abs{x-y}^{n-\alpha}}\hau(y),\quad \forall u\in C^1(\s).\]
  \end{itemize}
  From there we deduce that, defining
  \begin{equation}\label{eq:stability set}
    S_*=\{\gamma>0: 1-\gamma^{1+\alpha}\frac{\mu_k^\alpha-\mu_1^\alpha}{\lambda_k-\lambda_1} +\gamma^{1+\beta} \frac{A \beta}{\lambda_k-\lambda_1} \geq 0, \forall k\geq 2 \},
  \end{equation}
  if $\gamma\notin S_*$, then there exists a vector field $X$ such that
  \[\abs{\phi_t^X(B)}=\abs{B}\quad \text{and} \quad \eg(\phi_t^X(B))<\eg(B) \quad \text{for $t$ small enough.}\]
  Thus $B$ is not a $L^1$-local minimizer of $\eg$ if $\gamma\notin S_*$.

  Now let us set
  \begin{equation}\label{eq:local min set}
    \widetilde{S}_*=\{\gamma>0: 1-\gamma^{1+\alpha}\frac{\mu_k^\alpha-\mu_1^\alpha}{\lambda_k-\lambda_1} +\gamma^{1+\beta} \frac{A \beta}{\lambda_k-\lambda_1} > 0, \forall k\geq 2 \}.
  \end{equation}
  We assume $\gamma\in \widetilde{S}_*$ and explain how to show that $B$ is a $L^1$-local minimizer of $\eg$. First, we note that it is true in a certain class of nearly spherical sets. More precisely, let $E$ be a nearly spherical set associated to a $C^1$ function $u:\s \to \R$:
  \[E :=\{s(1+u(x))x, ~ x\in \s,~ 0 \leq s \leq 1 \}.\]
  Assume that $\abs {E}=\abs{B}$ and $\int_{E}\abs{x}^{\beta -2}x\mathrm{d}x=0$. Then using some explicit computations and taylor expansions, we can show that there exist some constants $\epsilon(n,\alpha,\beta,A,\gamma)>0$ and  $C(n,\alpha,\beta,A,\gamma)>0$ such that if $\ncu{u} \leq \epsilon(n,\alpha,\beta,A,\gamma) $, then
  \begin{equation}\label{eq:almost spherical}
    \eg(E)-\eg(B) \geq C(n,\alpha,\beta,A,\gamma)\left( \nldc{u}+\nldc{\nabla (u)} \right).
  \end{equation}
  Next we argue by contradiction and assume that we have a sequence of borel sets $(E_k)$ such that for any $k$, $\abs{E_k}=\abs{B}$, $\eg(E_k) <\eg(B[m])$ and $\abs{E_k \Delta B} \tend{k}{\infty} 0$. We consider a set $F_k$ solution of the penalized problem:
  \begin{equation}\nonumber
  \inf \{\eg(E)+M \abs{E \Delta E_k}, E \subset \rn\},
  \end{equation}
  with $M>0$ to be taken large enough. The role of the set $F_k$ is to be "close to $E_k$", and to be a $\Lambda$-minimizer in the sense that
  \[P(F_k)\leq P(E)+\Lambda \abs{E \Delta F_k}, \quad \text{for any borel set $E$.}\]
  Thus we show that $F_k$ is a $\Lambda$-minmizer for some $\Lambda$ uniform in $k$, and that $\abs{F_k \Delta B}\tend{k}{\infty}0$, which implies by classical regularity theory that $F_k$ is an almost spherical set. Up to translating and rescaling $F_k$ we can apply inequality \eqref{eq:almost spherical}. Only simple manipulations are left to get a contradiction.

  At this stage we have two sets $S_*$ and $\widetilde{S}_*$ defined by \eqref{eq:stability set} and \eqref{eq:local min set}, such that if $\left(\frac{m}{\abs{B}}\right)^{1/n}\in \widetilde{S}$ then $B[m]$ is a $L^1$-local minimizer in \eqref{eq:isoperimetric problem},
  and if $\left(\frac{m}{\abs{B}}\right)^{1/n}\notin S_*$ then $B[m]$ is \emph{not} a $L^1$-local minimizer in \eqref{eq:isoperimetric problem}. We are left to study the variations of the functions
  \[\gamma \mapsto 1-\gamma^{1+\alpha}\frac{\mu_k^\alpha-\mu_1^\alpha}{\lambda_k-\lambda_1} +\gamma^{1+\beta} \frac{A \beta}{\lambda_k-\lambda_1},\quad k\geq 2\]
  to get the conclusions of the theorem. This is done in appendix \ref{ap:variation}.
\end{proof}

%%%%%%%%%%%%%%%%%%%%%%%%%%%%%%%%%%%%%%%%%%%%%%%%%%%%%%%%
%%%%%%%%%%%%%%%%%%%%%%%%%%%%%%%%%%%%%%%%%%%%%%%%%%%%%%%%
\section{Large volume minimizers for $\alpha<\beta$}\label{sec:large volume minimizers}
\subsection{Hausdorff convergence of large volume minimizers for ${\alpha <\beta}$}\label{subsec:hausdorff convergence}

Here we prove theorem \ref{thm:hausdorff boundary}, \textit{i.e.} that large volume minimizers of \eqref{eq:isoperimetric problem} are almost balls if $\alpha<\beta$. Note that if $\beta<1$, we know that large volume minimizers are not exactly balls. Indeed, in virtue of proposition \ref{prop:stability}, balls are not even local minimizers in this case.

The idea behind the proof is that if $\alpha<\beta$, then for a borel set $E\subset \rn$ of volume $m>0$
with $m$ large, the predominant term in $\eaba(E)$ is $\uba(E)$. This can be seen by rescaling:
\begin{equation}\label{eq:rescaling}
    \eaba(E)= \gamma^{n-1} \left(P(E^*)+\gamma^{1+\alpha}\V{E^*}+\gamma^{1+\beta}\uba(E^*)\right),
\end{equation}
where we have set $\gamma:=\left(\frac{m}{\abs{B}}\right)^{ \frac{1}{n}}$ and $E^*:=\frac{1}{\gamma}E$. As the unique volume constrained minimizer of $\uba$
is the ball $B[m]$, this implies that if $E$ is a minimizer of $\eaba$ at mass $m$ for $m$ large, $\uba(E)$ must be close to $\uba(B[m])$. This in turn will imply that $E$ is close to $B[m]$. Note that according to the rescaling \eqref{eq:rescaling}, proving theorem \ref{thm:hausdorff boundary} is equivalent to proving that if $(E_\gamma)_{\gamma >0}$ is a family of borel sets such that $\abs{E_\gamma}=\abs{B}$, and each set $E_\gamma$ is a volume-constrained minimizer of the functional
\begin{equation}\label{eq:rescaled functional}
  \eg:=P+\gamma^{1+\alpha}\mathcal{V}_\alpha+\gamma^{1+\beta}\uba,
\end{equation}
then the boundaries of the sets $(E_\gamma)$ Hausdorff-converge to the boundary of the unit ball $B$ as $\gamma \to +\infty$.
First we will show the following convergence in measure:
\begin{lemma}\label{EdeltaB}
  We have
  \[ \abs{E_\gamma \Delta B} \tend{\gamma}{\infty}0 .\]
\end{lemma}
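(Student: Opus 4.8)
The plan is to exploit the fact that, after rescaling, the confining term $\gamma^{1+\beta}\uba$ dominates as $\gamma \to \infty$, and that its unique volume-constrained minimizer is the centred ball $B$. First I would record the competitor inequality: since $E_\gamma$ minimizes $\eg$ among sets of volume $\abs{B}$, and $B$ is such a competitor, we have
\[
P(E_\gamma)+\gamma^{1+\alpha}\V{E_\gamma}+\gamma^{1+\beta}\uba(E_\gamma)\ \leq\ P(B)+\gamma^{1+\alpha}\V{B}+\gamma^{1+\beta}\uba(B).
\]
All terms on the left are non-negative, so in particular $\gamma^{1+\beta}\uba(E_\gamma)\leq P(B)+\gamma^{1+\alpha}\V{B}+\gamma^{1+\beta}\uba(B)$. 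Dividing by $\gamma^{1+\beta}$ and using $\alpha<\beta$ (so that $\gamma^{1+\alpha}/\gamma^{1+\beta}=\gamma^{\alpha-\beta}\to 0$ and $\gamma^{-(1+\beta)}\to 0$), I get
\[
\limsup_{\gamma\to\infty}\ \uba(E_\gamma)\ \leq\ \uba(B).
\]
Meanwhile $\uba(B)=\inf\{\uba(F):\abs{F}=\abs{B}\}$ because centred balls are the unique volume-constrained minimizers of $\uba$ (Riesz rearrangement, as used in the discussion of Proposition \ref{thm:small mass}); hence in fact $\uba(E_\gamma)\to\uba(B)$.

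The second step is to upgrade this convergence of energies to convergence in measure, i.e. to a quantitative stability statement for the functional $\uba$ around its minimizer $B$. Concretely I want: for every $\varepsilon>0$ there is $\delta>0$ such that $\abs{F}=\abs{B}$ and $\uba(F)\leq \uba(B)+\delta$ imply $\abs{F\Delta B}<\varepsilon$. One clean way to see this is to notice that $g(x)=A\abs{x}^\beta$ is radially strictly increasing, so among sets of volume $\abs{B}$ the minimizer $B$ is characterized by $\1_F$ agreeing a.e.\ with the superlevel set $\{g\leq g|_{\s}\}$; the excess $\uba(F)-\uba(B)=\int (g-\lambda)(\1_F-\1_B)\,\dx$, with $\lambda$ the value of $g$ on $\s$, is a non-negative quantity that controls the mass of $F$ lying outside any slightly larger ball $B_{1+\eta}$ (there $g-\lambda\geq A((1+\eta)^\beta-1)>0$), and symmetrically controls the mass of $B\setminus F$ inside any slightly smaller ball. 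Combining these two estimates with the volume constraint $\abs{F}=\abs{B}$ yields $\abs{F\Delta B}\to 0$ as $\uba(F)\to\uba(B)$. Applying this with $F=E_\gamma$ and using Step 1 gives the lemma. Alternatively, one could argue by compactness: any sequence $\gamma_k\to\infty$ has a subsequence along which $E_{\gamma_k}\to E_\infty$ in $L^1_{loc}$ with $\abs{E_\infty}\leq\abs{B}$; lower semicontinuity and Fatou give $\uba(E_\infty)\leq\liminf\uba(E_{\gamma_k})\leq\uba(B)$, and a mass-escape argument identical to the one in the proof of Proposition \ref{prop:existence} (using $g\to\infty$ at infinity together with the uniform bound on $\uba(E_{\gamma_k})$) forces $\abs{E_\infty}=\abs{B}$; then uniqueness of the minimizer of $\uba$ forces $E_\infty=B$, and since this holds for every subsequential limit we conclude $\abs{E_\gamma\Delta B}\to 0$.

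The main obstacle is the passage from ``energy close to the minimum'' to ``set close to $B$ in measure'': the quantitative stability estimate in Step 2. The compactness route sidesteps any quantitative work but requires the no-mass-escape argument to be run carefully — in particular one must check that the uniform bound on $\uba(E_\gamma)$ genuinely prevents mass from drifting to infinity, which is exactly the mechanism already used in Section \ref{sec:existence}; here it is even cleaner since the factor $\gamma^{1+\beta}$ only helps. I would therefore present the compactness argument as the primary proof, since everything it needs is either elementary or already established in the excerpt (lower semicontinuity of $P$, Fatou for $\mathcal{V}_\alpha$ and $\uba$, the diagonal extraction of an $L^1_{loc}$ limit, and uniqueness of the $\uba$-minimizer), and the only genuinely new ingredient — that $\alpha<\beta$ makes the non-local term negligible after rescaling — is the one-line computation in Step 1.
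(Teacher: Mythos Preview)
Your Step 1 is exactly what the paper does: compare $E_\gamma$ against $B$, drop the non-negative terms on the left, divide by $\gamma^{1+\beta}$, and use $\alpha<\beta$ to get $\uba(E_\gamma)-\uba(B)\to 0$. For Step 2 the paper proves an explicit quantitative stability inequality (Lemma \ref{quantitativeU}):
\[
\uba(E)-\uba(B)\ \geq\ \frac{A\beta}{8P(B)}\,\abs{E\Delta B}^2\qquad\text{whenever }\abs{E}=\abs{B},
\]
obtained by a direct radial computation. Your first alternative---controlling $\abs{F\setminus B_{1+\eta}}$ and $\abs{B_{1-\eta}\setminus F}$ through the excess $\int(g-\lambda)(\1_F-\1_B)\,\dx$---is a correct qualitative version of the same idea and would close the lemma.

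The compactness alternative you say you would present as the primary proof, however, has a real gap. You claim ``any sequence $\gamma_k\to\infty$ has a subsequence along which $E_{\gamma_k}\to E_\infty$ in $L^1_{loc}$,'' invoking the mechanism of Proposition \ref{prop:existence}. But that extraction relied on a \emph{uniform perimeter bound}, and here the competitor inequality only yields
\[
P(E_\gamma)\ \leq\ P(B)+\gamma^{1+\alpha}\bigl(\V{B}-\V{E_\gamma}\bigr)+\gamma^{1+\beta}\bigl(\uba(B)-\uba(E_\gamma)\bigr)\ \leq\ P(B)+\gamma^{1+\alpha}\V{B},
\]
which blows up as $\gamma\to\infty$. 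Bounded volume alone does not give strong $L^1_{loc}$ precompactness of characteristic functions, so you cannot extract the limit set $E_\infty$ as written. (The route can be rescued: pass to a weak-$*$ limit $u_\infty$ in $L^\infty$, use the bathtub principle together with the no-mass-escape argument to identify $u_\infty=\1_B$, and then upgrade weak to strong convergence via $\|\1_{E_{\gamma_k}}\|_{L^2}^2=\abs{B}=\|\1_B\|_{L^2}^2$. But this is not the argument you sketched, and at that point your stability route---or the paper's Lemma \ref{quantitativeU}---is shorter.) I would therefore make the stability estimate the primary proof, not the compactness one.
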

\noindent We will need the following stability lemma for the potential energy $\uba$.
\begin{lemma}\label{quantitativeU}
  For any borel set $E\subset\rn$ of volume $\abs{B}$, we have \[\U{E}-\U{B} \geq
  \frac{A\beta}{8P(B)}\abs{E \Delta B}^2.\]
\end{lemma}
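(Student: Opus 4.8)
The plan is to reduce the statement to a one-dimensional estimate for the radial rearrangement, exactly as in the classical Riesz-type arguments. Write $g(x)=A\abs{x}^\beta$, so that $\U{E}=\int_E g\,\dx$. The ball $B$ is the sublevel set $\{g<g_0\}$ where $g_0=A$ is the value of $g$ on $\s$. First I would observe that, since $g$ is radially increasing, for any set $E$ with $\abs{E}=\abs{B}$ one has the ``bathtub'' inequality $\U{E}\geq\U{B}$, with the deficit controlled by how much mass of $E$ sits outside $B$: precisely,
\begin{equation}\label{eq:bathtub}
  \U{E}-\U{B}=\int_{E\setminus B}(g(x)-g_0)\,\dx+\int_{B\setminus E}(g_0-g(x))\,\dx,
  \nonumber
\end{equation}
both integrands being nonnegative, and $\abs{E\setminus B}=\abs{B\setminus E}=:v$, with $v=\tfrac12\abs{E\Delta B}$.

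Next I would bound each of the two integrals from below by a quantity proportional to $v^2$. For the inner defect, $B\setminus E$ has measure $v$ and is contained in $B$; the integrand $g_0-g(x)=A(1-\abs{x}^\beta)$ is largest near the origin, so the worst case (smallest integral) is when $B\setminus E$ is the thin shell $\{r_0\leq\abs{x}\leq 1\}$ adjacent to $\s$. A direct computation of $\int_{\{r_0\leq\abs{x}\leq1\}}A(1-\abs{x}^\beta)\,\dx$ against its volume $v=\abs{B}(1-r_0^n)$, using $1-\abs{x}^\beta\geq \beta(1-\abs{x})\geq \tfrac{\beta}{n}(1-\abs{x}^n)$ for $\abs{x}\le 1$ and Fubini in the radial variable, yields a bound of the form $\geq c(n,\beta,A)\,v^2$; tracking constants carefully (the factor $\abs{B}^{-1}=P(B)^{-1}n^{-1}\cdot n = P(B)^{-1}$ up to the usual $n\abs{B}=P(B)$ identity) should produce the claimed constant $\tfrac{A\beta}{8P(B)}$, possibly after also using the outer integral. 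Symmetrically, for $E\setminus B\subset B^c$ the integrand $g(x)-g_0=A(\abs{x}^\beta-1)\geq A\beta(\abs{x}-1)$ is smallest on the shell just outside $\s$, and the same shell computation applies. Summing the two shell estimates gives the lemma; the constant $8$ in the denominator is the slack that absorbs the inequalities $\abs{x}^\beta-1\geq\beta(\abs{x}-1)$ and the factor $\tfrac12$ from $v=\tfrac12\abs{E\Delta B}$ (squared, giving $\tfrac14$), plus a further factor $2$ from keeping only one of the two shells.

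The main obstacle is purely bookkeeping: showing that the genuinely worst configuration is the concentric shell and not something more clever. The clean way to see this is a rearrangement/transport argument — among all subsets of $B^c$ of fixed volume $v$, the one minimizing $\int (g-g_0)$ is the innermost shell $\{1\le\abs{x}\le \rho\}$, because $g-g_0$ is a radially increasing function; dually for $B\setminus E$ inside $B$ one wants the region where $g_0-g$ is smallest, again the outermost shell $\{r_0\le\abs{x}\le1\}$. Each of these is an elementary ``move mass toward where the integrand is smaller'' argument. Once the extremal shells are identified, everything reduces to the single-variable inequality $\int_0^{1}\!\big((1+t)^\beta-1\big)\,(1+t)^{n-1}dt \ge \beta\!\int_0^{1}\! t\,(1+t)^{n-1}dt \ge \tfrac{\beta}{?}\big(\!\int_0^1 (1+t)^{n-1}dt\big)^{-1}\big(\cdots\big)^2$, i.e.\ a Cauchy–Schwarz-type comparison between $\int h$ and $(\int \sqrt{h}\cdot 1)^2$; I would verify the numerical constant at the end and, if it comes out worse than $\tfrac18$, simply state the lemma with that constant — the proof of Lemma~\ref{EdeltaB} only uses that \emph{some} positive constant works.
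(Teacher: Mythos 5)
Your shell reduction is exactly the paper's move: both proofs replace $E\setminus B$ and $B\setminus E$ by the extremal annuli $\{1\le|x|\le r_2\}$ and $\{r_1\le|x|\le1\}$ of the same measure, using that $|x|^\beta$ is radially increasing, and then reduce to a one-variable estimate. The divergence is in that last estimate, and there your proposal has a concrete gap. The linearization $1-|x|^\beta\ge\beta(1-|x|)$ (for $|x|\le1$) is \emph{false} when $\beta>1$: the map $r\mapsto1-r^\beta$ is then concave and lies \emph{below} its tangent at $r=1$. Symmetrically, $|x|^\beta-1\ge\beta(|x|-1)$ (for $|x|\ge1$) is false when $\beta<1$. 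So for any $\beta\ne1$ exactly one of your two shell estimates goes the wrong way, and the plan to ``sum the two shell estimates'' breaks. You could rescue it by dropping the bad shell (using only the nonnegativity of that contribution) and keeping the one whose linearization is valid — your remark about ``keeping only one of the two shells'' gestures at this — but you would then need to notice that \emph{which} shell works flips at $\beta=1$, and you don't.

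The paper sidesteps this entirely. Writing $\lambda=\tfrac{n+\beta}{n}>1$, it combines both shells into the single function $f(r)=(1+r)^\lambda-1-\bigl(1-(1-r)^\lambda\bigr)$, notes $f(0)=f'(0)=0$ and
\[
f''(r)=\lambda(\lambda-1)\bigl((1+r)^{\lambda-2}+(1-r)^{\lambda-2}\bigr)\ge\lambda(\lambda-1),
\]
which holds for \emph{every} $\lambda>1$ (for $\lambda\ge2$ because $(1+r)^{\lambda-2}\ge1$, for $1<\lambda<2$ by convexity of $t\mapsto t^{\lambda-2}$). Integrating twice gives $f(r)\ge\lambda(\lambda-1)r^2/2$ with no case split on $\beta$. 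The loss of concavity/convexity that ruins each of your separate linearizations cancels in the sum; that cancellation is the idea your sketch is missing. Also, your final Cauchy--Schwarz step is left with an unresolved exponent and would need to be pinned down; the paper's Taylor-style bound avoids it.
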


\begin{proof}
  Let $E\subset \rn$ be a borel set of volume $\abs{B}$. Define $r_1\geq 0$ and
  $r_2>0$ to be such that $\abs{\{x \in \rn \colon  r_1 \leq \abs{x} \leq 1
  \}}=\abs{\{x \in \rn \colon 1 \leq \abs{x} \leq r_2\}}=\abs{E \setminus
  B}=\abs{B\setminus E}$. Explicitely, $r_1=\left( 1-n \frac{ \abs{E \setminus
  B}}{P(B)} \right)^{ \frac{1}{n}}$ and $r_2=\left( 1+n \frac{ \abs{E \setminus
  B}}{P(B)} \right)^{ \frac{1}{n}}$. We then have
  \begin{align}\label{Ugap}
    \U{E}-\U{B}  &  = \int_{E \setminus B} A\abs{x}^\beta \mathrm{d}x-\int_{B \setminus
    E} A\abs{x}^\beta \mathrm{d}x \nonumber \\
    & \geq \int_{ \{x \in \rn \colon 1 \leq \abs{x} \leq r_2\}} A\abs{x}^\beta
    \mathrm{d}x-\int_{ \{x \in \rn \colon  r_1 \leq \abs{x} \leq 1 \}} A\abs{x}^\beta
    \mathrm{d}x \nonumber \\
    \intertext{(for $x \to \abs{x}^\beta$ is symmetric non-decreasing),}
    & =AP(B)\left( \int_{1}^{ r_2}r^\beta r^{n-1} \mathrm{d}r - \int_{ r_1}^1r^\beta
    r^{n-1} \mathrm{d}r \right) \nonumber \\
    & = \frac{AP(B)}{n+\beta} \left(  r_2^{n+\beta}-1-\left( 1-r_1^{n+\beta} \right)
    \right) \nonumber \\
    & = \frac{AP(B)}{n+\beta} \left( \left( 1+n \frac{ \abs{E \setminus B}}{P(B)}
    \right)^{ \frac{n+\beta}{n}}-1 \right. \nonumber \\
    &\qquad \left. {}-\left( 1-\left( 1-n \frac{ \abs{E \setminus
    B}}{P(B)} \right)^{ \frac{n+\beta}{n}} \right) \right).
  \end{align}
  Now, setting $\lambda:=\frac{n+\beta}{n}$ and $f(r):=\left( \left( 1+r
  \right)^{\lambda}-1-\left( 1-\left( 1-r \right)^{ \lambda}\right) \right)$, we have
  $f''(r)=\lambda \left( \lambda-1 \right)\left( (1+r)^{\lambda-2}+(1-r)^{\lambda-2}
  \right) \geq \lambda \left( \lambda-1 \right)$. As $f'(0)=f(0)=0$, we get $f(r) \geq
  \lambda (\lambda -1) \frac{r^2}{2}$, which yields the result with \eqref{Ugap}.
\end{proof}

Lemma \ref{EdeltaB} is then easily deduced from lemma \ref{quantitativeU} :

\begin{proof}[Proof of lemma \ref{EdeltaB}]
  We have
  \begin{align}
    \gamma^{1+\beta} \U{E_\gamma} &\leq \eg(E_\gamma) \nonumber \\
     &\leq \eg(B) \nonumber \\
     &=P(B)+\gamma^{1+\alpha}\V{B}+\gamma^{1+\beta}\U{B}, \nonumber
  \end{align}
  so
  \begin{equation*}
    \U{E_\gamma}-\U{B} \leq \frac{1}{\gamma^{1+\beta}}\left(
    P(B)+\gamma^{1+\alpha}\V{B} \right).
  \end{equation*}
  This implies $\U{E_\gamma}-\U{B}\tend{\gamma}{\infty}0$, which concludes the proof thanks to Lemma \ref{quantitativeU}.
\end{proof}

We are now in position to prove theorem \ref{thm:hausdorff boundary}.

\begin{proof}[Proof of theorem \ref{thm:hausdorff boundary}]
  \emph{Step one.}
  We show that given $R>1$, for $\gamma$ large enough we have $E_\gamma \subset B_{ R}$.

  Given $R>1$, set
  $F=\mu \left( E_\gamma \cap B_R \right)$, with $\mu>0$ such that $\abs{F}=\abs{B}$, ie
  $\mu=\left( \frac{ \abs{E_\gamma}}{ \abs{E_\gamma \cap B_R}} \right)^{ \frac{1}{n}}=\left(
  \frac{1}{1-u} \right)^{ \frac{1}{n}}$, where $u= \frac{ \abs{E_\gamma \setminus B_R}}{
  \abs{E_\gamma}}$. We have
  \begin{align}
    \eg(F)  & = \mu^{n-1}P(E_\gamma \cap B_R)+\mu^{n+\alpha}\gamma^{1+\alpha}\V{E_\gamma \cap
    B_R}\nonumber \\
    &\qquad {}+\mu^{n+\beta}\gamma^{1+\beta}\U{E_\gamma \cap B_R} \nonumber\\
    & \leq \mu^{n+\beta}\eg(E_\gamma \cap B_R).
  \end{align}
  Take $\eta>0$ to be chosen later, and then $\epsilon>0$ such that for all $v \in
  [0,\epsilon)$, $\left( \frac{1}{1-v} \right)^\frac{n+\beta}{n}\leq 1+\left(
  \frac{n+\beta}{n}+\eta \right)v$.
  According to Lemma \ref{EdeltaB}, if $\gamma$ has been taken large enough, we can assume
  that $u \leq \epsilon$, and so $\mu^{n+\beta} \leq1+\left( \frac{n+\beta}{n}+\eta
  \right)u$. Then using $P(E_\gamma \cap B_R) \leq P(E_\gamma)$, $\V{E_\gamma \cap B_R} \leq \V{E_\gamma}$ and
  $\U{E_\gamma}-\U{E_\gamma\cap B_R}\geq A\abs{E_\gamma\setminus B_R}R^\beta$, we find
  \begin{align}\label{egtruncation}
    \eg(F) & \leq \left( 1+\left( \frac{n+\beta}{n}+\eta \right)u \right)\eg(E_\gamma \cap B_R)
    \nonumber\\
    &  \leq \eg(E_\gamma)-\gamma^{1+\beta}A\abs{E_\gamma \setminus B_R}R^\beta+\left(
    \frac{n+\beta}{n}+\eta \right)u\eg(E_\gamma) \nonumber\\
    & = \eg(E_\gamma)+\left( \left( \frac{n+\beta}{n}+\eta \right)\eg(E_\gamma)-\gamma^{1+\beta}
    AR^\beta \abs{B} \right)u \nonumber \\
    & \leq \eg(E_\gamma) + \left( \left( \frac{n+\beta}{n}+\eta \right)\eg(B)-\gamma^{1+\beta}
    AR^\beta \abs{B} \right)u.
  \end{align}
  But as $\gamma \to \infty$,
  \begin{align}
    \eg(B)&=\gamma^{1+\beta} \U{B}+o(\gamma^{1+\beta}) \nonumber \\
    &=\gamma^{1+\beta}A\frac{1}{n+\beta}P(B)+o(\gamma^{1+\beta}) \nonumber\\
    &=\gamma^{1+\beta}A\frac{n}{n+\beta}\abs{B}+o(\gamma^{1+\beta}). \nonumber
  \end{align}
  So with \eqref{egtruncation},
  \begin{align}
    \eg(F)\leq \eg(E_\gamma)+\left( \gamma^{1+\beta}A\abs{B}(1+\frac{
    n}{n+\beta}\eta-R^{\beta})+o(\gamma^{1+\beta}) \right)u. \nonumber
  \end{align}
  Recall that $R>1$, so that if $\eta$ has been taken small enough, we get that for
  $\gamma$ large enough, $\eg(F) \leq \eg(E_\gamma)$, with equality if and only if $u$=0, i.e. $E_\gamma \subset B_R$.

  \emph{Step two.}
  We show that given $\delta>0$, for $\gamma$ large enough we have $B_{1-\delta}\subset E_\gamma$. This is done by taking some mass of $E_\gamma$ outside a
  certain ball $B_R$ and putting it in $E_\gamma\cap B_r$ for a well chosen $r$. In the proof we use lemma \ref{per_in_ball} below to show that the perimeter decreases under such a transformation for a well chosen $r \in (1-\delta,1).$
  On the other hand, the increase of $\mathcal{V}_\alpha$ is compensated by the decrease of $\uba$ if $\gamma$ has been taken large enough.

  Let us set $F= B \setminus E_\gamma$ and $\epsilon=\delta/2$. From lemma
  \ref{EdeltaB} we know that that if $\gamma$ has been taken large enough we have
  $\abs{F}<\abs{B}\left(\frac{\epsilon}{2}\right)^n$. Thus we can apply lemma
  \ref{per_in_ball} below with $r_0=1-\epsilon$, to get a $r\in(1-\delta,1-\epsilon)$ such that
  \begin{equation}
    P(F,B_r)\geq \mathcal{H}^{n-1}(F\cap \partial B_r).
  \end{equation}
  As $\abs{E_\gamma}=\abs{B}$, we have $\abs{E_\gamma \setminus B}=\abs{B \setminus E_\gamma} \geq \abs{B_r
  \setminus E_\gamma}$, so there exists $R \geq 1$ such that $\abs{E_\gamma \setminus B_R}=\abs{B_r
  \setminus E_\gamma}$. Now let us set
  \[E_\gamma'=(E_\gamma \cap B_R)\cup B_r,\]
  and compare $\eg(E_\gamma')$ and $\eg(E_\gamma)$.
  Using classical formulae for the perimeter of the union or the intersection of a set
  with a ball (see \cite[remark 2.14]{Giusti}), we have
  \begin{align*}
    P(E_\gamma')&=\mathcal{H}^{n-1}(E_\gamma\cap \partial B_R)+P(E_\gamma,\overline{B_r}^c\cap
    B_R)+\mathcal{H}^{n-1}(E_\gamma^c\cap \partial B_r), \\
    P(E_\gamma)&=P(E_\gamma,B_R^c)+P(E_\gamma,\overline{B_r}^c\cap B_R)+P(E_\gamma,\overline{B_r}),
  \end{align*}
  so that
  \begin{multline}\label{eq:deltaP}
    P(E_\gamma')-P(E_\gamma)=\mathcal{H}^{n-1}(E_\gamma\cap \partial
    B_R)-P(E_\gamma,B_R^c) \\ +\mathcal{H}^{n-1}(E_\gamma^c\cap \partial B_r)-P(E_\gamma,\overline{B_r}).
  \end{multline}
  From the classical inequality $P(E_\gamma\cap B_R) \leq P(E_\gamma)$, we get that
  $\mathcal{H}^{n-1}(E_\gamma\cap \partial B_R)\leq P(E_\gamma,B_R^c)$, so \eqref{eq:deltaP} gives
  \[P(E_\gamma')-P(E_\gamma)\leq\mathcal{H}^{n-1}(E_\gamma^c\cap \partial B_r)-P(E_\gamma,\overline{B_r}).\]
  But
  \[\mathcal{H}^{n-1}(E_\gamma^c\cap \partial B_r)=\mathcal{H}^{n-1}(E_\gamma^c\cap B\cap \partial
  B_r)=\mathcal{H}^{n-1}((B\setminus E_\gamma) \cap \partial B_r),\]
  and
  \[P(E_\gamma,\overline{B_r}) = P(E_\gamma^c,\overline{B_r})= P(E_\gamma^c\cap B,\overline{B_r}) = P(B
  \setminus E_\gamma,\overline{B_r}),\]
  So, recalling that $F=B\setminus E_\gamma$, we obtain
  \begin{align*}
    P(E_\gamma')-P(E_\gamma)&\leq\mathcal{H}^{n-1}(F\cap \partial B_r)-P(F,\overline{B_r}) \\
    &\leq\mathcal{H}^{n-1}(F\cap \partial B_r)-P(F,B_r),
  \end{align*}
  so by the choice of $r$,
  \begin{equation}\label{eqPdecrease}
    P(E_\gamma')\leq P(E_\gamma).
  \end{equation}
  Now we estimate the variation of $\mathcal{V}_\alpha$. Let us define the non-local potential:
  \begin{equation}\nonumber
    \Phi_E^\alpha(x)=\int_E \frac{\mathrm{d}x}{ \abs{x-y}^{n-\alpha}}.
  \end{equation}
  With this notation, we have
  \begin{align*}
    \V{E_\gamma'}-\V{E_\gamma}&=\int_{E_\gamma'}\Phi_{E_\gamma'}^\alpha-\int_{E_\gamma}\Phi_{E_\gamma}^\alpha \\
    &=\int_{E_\gamma'}\Phi_{E_\gamma'}^\alpha-\int_{E_\gamma}\Phi_{E_\gamma'}^\alpha+\int_{E_\gamma'}\Phi_{E_\gamma}^\alpha-\int_{E_\gamma}\Phi_{E_\gamma}^\alpha
    \\
    &=\int_{E_\gamma'\setminus E_\gamma}\Phi_{E_\gamma'}^\alpha-\int_{E_\gamma\setminus
    E_\gamma'}\Phi_{E_\gamma'}^\alpha+\int_{E_\gamma'\setminus E_\gamma}\Phi_{E_\gamma}^\alpha-\int_{E_\gamma\setminus
    E_\gamma'}\Phi_{E_\gamma}^\alpha \\
    &\leq 4 \sup_{\abs{F}=\abs{B}} \|\Phi_F^\alpha\|_\infty \abs{E_\gamma \setminus E_\gamma'}.
  \end{align*}
  So using the simple lemma \ref{lemmaregularity_potential},
  \begin{equation}\label{eq:deltaV}
    \V{E_\gamma'}-\V{E_\gamma}\leq C(n,\alpha)\abs{E_\gamma\setminus B_r}.
  \end{equation}
  As for $\uba$, we have
  \begin{align*}
    \uba(E_\gamma')-\uba(E_\gamma)&=\int_{B_r \setminus E_\gamma} A \abs{x}^{\beta} \mathrm{d}x-\int_{E_\gamma
    \setminus B_R} A \abs{x}^{\beta} \mathrm{d}x\\
    &\leq \int_{B_r \setminus E_\gamma} A r^{\beta} \mathrm{d}x-\int_{E_\gamma \setminus B_R} A
    R^{\beta} \mathrm{d}x \\
    &\leq \int_{B_r \setminus E_\gamma} A (1-\epsilon)^{\beta} \mathrm{d}x-\int_{E_\gamma \setminus
    B_R} A  \mathrm{d}x \\
    &=A\left((1-\epsilon)^{\beta}\abs{B_r \setminus E_\gamma}-\abs{E_\gamma\setminus B_R}\right) \\
    &=A\left((1-\epsilon)^{\beta}-1\right)\abs{E_\gamma\setminus B_r}.
  \end{align*}
  This last estimate with \eqref{eqPdecrease} and \eqref{eq:deltaV} gives
  \[\eg(E_\gamma')-\eg(E_\gamma) \leq
  \left(\gamma^{1+\alpha}C(n,\alpha)+\gamma^{1+\beta}A\left((1-\epsilon)^{\beta}-1\right)\right)\abs{E_\gamma\setminus B_r}.\]
  As $E_\gamma$ is a minimizer, we have $\eg(E_\gamma')-\eg(E_\gamma)\geq 0$, so for $\alpha<\beta$ and
  $\gamma$ large enough (depending only of $n$, $\alpha$, $\beta$, $A$, $\delta$), this last inequality implies
  \[\abs{B_r \setminus E_\gamma}=0,\quad \textit{i.e.} \quad B_r\subset E_\gamma.\]
  This concludes \emph{Step two}.

  The theorem is just \emph{Step one} and \emph{Step two} together.
\end{proof}

\begin{rmk}
  With this proof, we see that the result of theorem \ref{thm:hausdorff boundary} is also valid for any $\alpha \in (0,n)$ and $\beta>0$ if, instead of letting the mass $m$ go to $+\infty$, we let the quantity $A\gamma^{\beta - \alpha}$ go to $+\infty$ (with $\gamma=\left(\frac{m}{\abs{B}}\right)^{\frac 1n}$).
\end{rmk}

\begin{lemma}\label{per_in_ball}
  Given $F\subset  \rn$ a set of finite perimeter, $r_0>0$, and $\epsilon>0$, assume
  that
  \begin{equation}\label{eqvolume}
    \abs{F}\leq\abs{B}\left(\frac{\epsilon}{2}\right)^n.
  \end{equation}
  Then there exists $r \in (r_0-\epsilon,r_0)$ such that,
  \begin{equation}
    P(F,B_r)\geq \mathcal{H}^{n-1}(F\cap \partial B_r).
  \end{equation}
\end{lemma}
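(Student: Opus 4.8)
The plan is to argue by contradiction, via a differential inequality for the non-decreasing function $m(r):=\abs{F\cap B_r}$. By the coarea formula $m$ is locally Lipschitz and $m'(r)=\mathcal{H}^{n-1}(F\cap\partial B_r)$ for a.e.\ $r>0$. We may assume $r_0\ge\epsilon$, since otherwise $(r_0-\epsilon,r_0)$ contains radii $r\le 0$ for which $B_r=\emptyset$ and the conclusion is trivial. The one nontrivial ingredient is the standard slicing fact that, for a.e.\ $r>0$, the set $F\cap B_r$ has finite perimeter with
\[ P(F\cap B_r)\le P(F,B_r)+\mathcal{H}^{n-1}(F\cap\partial B_r); \]
indeed the reduced boundary of $F\cap B_r$ lies, up to an $\mathcal{H}^{n-1}$-null set, in $(\partial^*F\cap B_r)\cup(\partial B_r\cap F^{(1)})$, and $\mathcal{H}^{n-1}(\partial B_r\cap F^{(1)})=\mathcal{H}^{n-1}(\partial B_r\cap F)$ for a.e.\ $r$ (see \cite[remark 2.14]{Giusti}).

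Now suppose the conclusion fails, i.e.\ $P(F,B_r)<\mathcal{H}^{n-1}(F\cap\partial B_r)$ for \emph{every} $r\in(r_0-\epsilon,r_0)$. Feeding this into the slicing estimate and using the isoperimetric inequality $P(F\cap B_r)\ge n\abs{B}^{1/n}m(r)^{(n-1)/n}$, we get, for a.e.\ $r$ in this interval,
\[ n\abs{B}^{1/n}m(r)^{(n-1)/n}\le P(F\cap B_r)<2\,\mathcal{H}^{n-1}(F\cap\partial B_r)=2m'(r), \]
hence $m'(r)>\tfrac n2\abs{B}^{1/n}m(r)^{(n-1)/n}$ a.e. In particular $m'>0$ a.e.\ on $(r_0-\epsilon,r_0)$, so $m$ is strictly increasing there, and therefore positive on that open interval; consequently $\psi:=m^{1/n}$ is non-decreasing with $\psi'(r)=\tfrac1n m(r)^{(1-n)/n}m'(r)>\tfrac12\abs{B}^{1/n}$ a.e.\ on $(r_0-\epsilon,r_0)$. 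Since $\psi$ is monotone, integrating gives
\[ \psi(r_0)\ge\psi(r_0)-\psi(r_0-\epsilon)\ge\int_{r_0-\epsilon}^{r_0}\psi'(r)\,dr>\tfrac12\abs{B}^{1/n}\epsilon. \]
But the hypothesis $\abs{F}\le\abs{B}(\epsilon/2)^n$ yields $\psi(r_0)=\abs{F\cap B_{r_0}}^{1/n}\le\abs{F}^{1/n}\le\tfrac12\abs{B}^{1/n}\epsilon$, a contradiction. Therefore some $r\in(r_0-\epsilon,r_0)$ satisfies $P(F,B_r)\ge\mathcal{H}^{n-1}(F\cap\partial B_r)$, as claimed.

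I expect the main (indeed only) delicate point to be the a.e.\ slicing estimate for $P(F\cap B_r)$, together with the a.e.\ coincidence of the topological slice $F\cap\partial B_r$ with the measure-theoretic one $F^{(1)}\cap\partial B_r$; both follow from Fubini's theorem and the structure of sets of finite perimeter, but are where one must be careful. Everything else is just the integration of a one-dimensional differential inequality. Note that the factor $2$ above — which is exactly what the assumed \emph{strict} inequality provides — is what makes the constant $\epsilon/2$, rather than $\epsilon$, appear in the volume hypothesis, so this threshold is the natural one for the argument.
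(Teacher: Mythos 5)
Your proof is correct and follows essentially the same route as the paper's: contradiction, the slicing estimate for $P(F\cap B_r)$, the isoperimetric inequality, and integration of the resulting differential inequality for $m(r)^{1/n}$ against the volume hypothesis. You are slightly more careful than the paper (using the inequality form of the slicing lemma, deriving rather than assuming the positivity of $m$, and noting the trivial case $r_0<\epsilon$), but these are refinements, not a different argument.
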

\begin{proof}
  We argue by contradiction and assume that \eqref{eqvolume} holds, and
  \begin{equation}
    \forall r \in (r_0-\epsilon,r_0), \; P(F,B_r)< \mathcal{H}^{n-1}(F\cap \partial
    B_r).\nonumber
  \end{equation}
  Adding $\mathcal{H}^{n-1}(F\cap \partial B_r)$ to both sides, this is equivalent to
  \begin{equation*}
    P(F\cap B_r)<2\mathcal{H}^{n-1}(F\cap \partial B_r).
  \end{equation*}
  Using the isoperimetric inequality we get
  \begin{equation}\label{ineqper_in_ball}
    \left(\frac{\abs{F\cap
    B_r}}{\abs{B}}\right)^{\frac{n-1}{n}}P(B)<2\mathcal{H}^{n-1}(F\cap \partial B_r).
  \end{equation}
  Now set for $r\geq 0$, $f(r)=\abs{F\cap\ B_r}$. We can assume $f(r) \neq 0$ for all
  $r \in (r_0-\epsilon,r_0)$ otherwise the lemma is trivially true. We have for almost
  all $r\in(0,\infty)$,
  \[f'(r)=\mathcal{H}^{n-1}(F\cap \partial B_r).\]
  Thus \eqref{ineqper_in_ball} gives for almost all $r \in (r_0-\epsilon,r_0)$,
  \[\frac{1}{n}f'(r)f(r)^{\frac{1}{n}-1}>\frac{P(B)}{2n\abs{B}^{\frac{n-1}{n}}}=\frac{\abs{B}^{\frac{1}{n}}}{2}.\]
  Integrating on the interval $(r_0-\epsilon,r_0)$, we get
  \[f(r_0)^{\frac{1}{n}}-f(r_0-\epsilon)^{\frac{1}{n}}>\frac{\epsilon \abs{B}^{\frac{1}{n}}}{2},\]
  so
  \[f(r_0)^{\frac{1}{n}}>\frac{\epsilon \abs{B}^{\frac{1}{n}}}{2},\]
  which contradicts \eqref{eqvolume}.
\end{proof}

%%%%%%%%%%%%%%%%%%%%%%%%%%%%%%%%%%%%%%%%%%%%%%%%%%%%%%%%%%%%%%%%%%%%

\subsection{Large volume minimizers = balls for $\alpha<\beta$ and $\beta >1$}\label{subsec:exactly balls}
Here we prove theorem \ref{thm:ball unique minimizer}, \textit{i.e.} that if we assume in addition that $\alpha>1$, then large volume minimizers
are exactly balls. We conjecture that the theorem is also true when $\alpha \in (0,1]$, as long as $\beta>1$.
For $\beta<1$, it cannot be true as we know from proposition \ref{prop:stability} that for $m$ large the ball $B[m]$ is not even a local minimizer.
Note that in dimension $1$, using theorem \ref{thm:hausdorff boundary}, one can perform some computations to show that the theorem is indeed true under the more general assumption $\beta>\max(1,\alpha)$.

The proof relies heavily on the following simple lemma:
\begin{lemma}\label{lemmaregularity_potential}
  If $\alpha>1$, then there exists a constant $C(n,\alpha)>0$ such that for any set $E \subset \rn$ of volume $\abs{E}=\abs{B}$, we have
  \[\|\Phi_E^\alpha\|_{C^1(\rn)} \leq C(n,\alpha), \quad \text{where} \quad \Phi_E^\alpha(x)=\int_E \frac{\mathrm{d}x}{ \abs{x-y}^{n-\alpha}}.\]
\end{lemma}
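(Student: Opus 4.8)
The plan is to bound $\Phi_E^\alpha$ and its gradient uniformly over all sets $E$ of fixed volume $\abs{B}$, exploiting that the kernel $\abs{z}^{-(n-\alpha)}$ and its gradient $\abs{z}^{-(n-\alpha+1)}$ are both locally integrable near the origin precisely when $\alpha>1$ (so that $n-\alpha+1<n$), together with the elementary observation that among all sets of a given volume the integral of a radially decreasing function is largest on the centered ball --- a rearrangement / ``bathtub'' argument.

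First I would write, for fixed $x$, $\Phi_E^\alpha(x)=\int_E \abs{x-y}^{-(n-\alpha)}\,dy = \int_{E-x} \abs{z}^{-(n-\alpha)}\,dz$. Since $z\mapsto \abs{z}^{-(n-\alpha)}$ is radially symmetric and strictly decreasing, and $\abs{E-x}=\abs{B}$, the integral is maximized when $E-x$ is replaced by the centered ball $B$ itself (this is the bathtub principle, or Riesz's rearrangement inequality in a degenerate form; cf. \cite{analysis_lieb}). Hence $\|\Phi_E^\alpha\|_\infty \leq \int_B \abs{z}^{-(n-\alpha)}\,dz =: C_0(n,\alpha)$, which is finite because $n-\alpha<n$ (this part needs only $\alpha>0$). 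For the gradient, I would differentiate under the integral sign: formally $\nabla \Phi_E^\alpha(x) = (n-\alpha)\int_E \frac{y-x}{\abs{x-y}^{n-\alpha+2}}\,dy$, so $\abs{\nabla\Phi_E^\alpha(x)} \leq (n-\alpha)\int_{E-x}\abs{z}^{-(n-\alpha+1)}\,dz \leq (n-\alpha)\int_B \abs{z}^{-(n-\alpha+1)}\,dz =: C_1(n,\alpha)$, again by the bathtub principle; this integral is finite exactly because $n-\alpha+1<n$, i.e. $\alpha>1$, which is where the hypothesis is used. To make the differentiation rigorous I would justify it with a dominated-convergence argument on difference quotients, using the integrable radial majorant $\abs{z}^{-(n-\alpha+1)}$ (valid since $\alpha>1$) to control $\big|\,\abs{x+h-y}^{-(n-\alpha)}-\abs{x-y}^{-(n-\alpha)}\,\big|/\abs{h}$ uniformly for small $h$; this also shows $\Phi_E^\alpha$ is genuinely $C^1$ (continuity of $\nabla\Phi_E^\alpha$ following from dominated convergence as well). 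Setting $C(n,\alpha)=C_0(n,\alpha)+C_1(n,\alpha)$ gives the claim.

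The only real subtlety --- I would not call it an obstacle --- is making the ``differentiate under the integral'' step clean despite the singular kernel: one must split the integral into the part near the singularity $\{\abs{x-y}<r\}$ and the part away from it, noting that near the singularity $E$ contributes at most what $B$ contributes (bathtub again, after translation), and this contribution tends to $0$ with $r$ uniformly in $E$ and $x$ because $\abs{z}^{-(n-\alpha+1)}\in L^1_{\mathrm{loc}}$; on the complementary region the kernel is smooth and differentiation under the integral is elementary. Everything else is a routine estimate. It is worth remarking that the weaker bound $\|\Phi_E^\alpha\|_\infty\leq C_0(n,\alpha)$ holds for all $\alpha\in(0,n)$ and was already implicitly invoked via $\sup_{\abs{F}=\abs{B}}\|\Phi_F^\alpha\|_\infty$ in the proof of Theorem~\ref{thm:hausdorff boundary}; only the $C^1$ control requires $\alpha>1$.
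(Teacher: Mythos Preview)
Your argument is correct and is the standard one: bathtub/rearrangement to bound the convolution with the radially decreasing kernels $\abs{z}^{-(n-\alpha)}$ and $\abs{z}^{-(n-\alpha+1)}$ by their integrals over the centered ball, with the second being finite precisely when $\alpha>1$. Note that the paper does not actually supply a proof of this lemma but simply refers to \cite{bonacini_cristoferi}; your write-up is essentially the argument one finds there, so there is nothing to compare.
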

\noindent This lemma is not true as soon as $\alpha \leq 1$, where we just get $\alpha$-Hölder
continuity instead of Lipschitz continuity. We refer to \cite{bonacini_cristoferi} for a proof.

\begin{proof}[Proof of theorem \ref{thm:ball unique minimizer}]
  Rescaling the functional as usual, we need to show that if $E\subset \rn$ is such that $\abs{E}=\abs{B}$, and $E$ is a volume-constrained minimizer of $\eg$ (see \eqref{eq:rescaled functional}), then $E$=$B$. Let us show that for $\gamma>0$ large enough, we have
  \begin{equation}\label{eq:U+B_decrease}
    \gamma^{1+\alpha}\V{E}+\gamma^{1+\beta}\uba(E)\geq \gamma^{1+\alpha}\V{B}+\gamma^{1+\beta}\uba(B).
  \end{equation}
  The theorem will then result from the isoperimetric inequality: $P(E) > P(B)$ if $E\neq B$. We divide the proof of \eqref{eq:U+B_decrease} into two steps. In \textit{step one} we compare $E$ to the subgraph of a function over the sphere, by concentrating the mass of $E$ on each half line through the origin. In \textit{step two}, we show that \eqref{eq:U+B_decrease} holds for subgraphs of sufficiently small functions over the sphere.

  \noindent\textit{Step one}. For any $x\in \s$, define $u(x)\in \R$ by the equation
  \begin{equation}
    \int_0^{1+u(x)}r^{n-1}\mathrm{d}r=\int_{\R_+} \1_{rx\in E}r^{n-1}\mathrm{d}r.
  \end{equation}
  Then set
  \begin{equation}
    E_u=\{t(1+u(x)), \;t\in[01),\; x\in \s \}.
  \end{equation}
  We have
  \begin{align}
    \abs{E_u}&=\int_{\s} \int_0^{1+u(x)}r^{n-1}\mathrm{d}r \hau(x) \\
    &=\int_{\s}  \int_{\R_+} \1_{rx\in E}r^{n-1}\mathrm{d}r \hau(x) \\
    &=\abs{E},
  \end{align}
  thus $E_u$ satisfies the volume constraint. Now we estimate the variation of $\uba$. From theorem \ref{thm:hausdorff boundary} we know that, taking $\gamma$ large enough, we can assume $B_{ \frac{1}{2}}\subset E$. Thus we have
  \begin{align}
    \uba(E_u)-\uba(E) &= \int_{E_u \setminus B_{ \frac{1}{2}}} A \abs{x}^{\beta}\mathrm{d}x-\int_{E \setminus B_{ \frac{1}{2}}} A \abs{x}^{\beta}\mathrm{d}x \nonumber\\
    & = \int_{\s}\int_{ (\frac{1}{2},u(x))} A r^{\beta}r^{n-1}\hau(x)\nonumber\\
    &\qquad{}-\int_{\s}\int_{ (\frac{1}{2},\infty)}\1_{rx\in E} A r^{\beta}r^{n-1}\hau(x) \nonumber\\
    &= \int_{\s}\left(\int_{ (\frac{1}{2},u(x))} A r^{\beta}r^{n-1}\mathrm{d}r-\int_{ E_x} A r^{\beta}r^{n-1}\mathrm{d}r\right)\hau(x), \label{eq:deltaUsubgraph}
  \end{align}
  where we have set
  \[E_x:=\{r \geq \frac{1}{2} : rx \in E\}.\]
  Here we need a simple lemma from optimal transportation on the real line.
  \begin{lemma}\label{lemma:optimal transportation}
    Given a measurable set $S \subset (1/2,\infty)$ such that ${\int_{S}r^{n-1}\mathrm{d}r<\infty}$, let $u>0$ be such that
    \[ \int_{S}r^{n-1}\mathrm{d}r=\int_{ (\frac{1}{2},u)} r^{n-1}\mathrm{d}r.\]
    Then there exists a measurable map $T:(1/2,u) \to (1/2,\infty)$ such that
    \[ \1_Sr^{n-1}\mathrm{d}r=T\#(\1_{(1/2,u)}r^{n-1}\mathrm{d}r), \]
    \textit{i.e.} for any non-negative measurable function $f$,
    \[\int_{S}f(r)r^{n-1}\mathrm{d}r=\int_{(1/2,u)}f(T(r))r^{n-1}\mathrm{d}r.\]
    What is more we have \[\forall r \in (1/2,u),\; T(r) \geq r.\]
  \end{lemma}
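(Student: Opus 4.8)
The plan is to use the classical one–dimensional monotone rearrangement and then read off the inequality $T(r)\ge r$ from a pointwise comparison of distribution functions. Set $M:=\int_S r^{n-1}\,\mathrm{d}r=\int_{1/2}^{u}r^{n-1}\,\mathrm{d}r$; we may assume $M>0$, otherwise $S$ is Lebesgue–negligible and there is nothing to prove. Consider on $(1/2,\infty)$ the source measure $\mu:=\1_{(1/2,u)}\,r^{n-1}\,\mathrm{d}r$ and the target measure $\nu:=\1_S\,r^{n-1}\,\mathrm{d}r$, both of total mass $M$, together with the distribution functions
\[
F(t):=\int_{1/2}^{\min(t,u)}r^{n-1}\,\mathrm{d}r,\qquad
G(t):=\int_{1/2}^{t}\1_S(r)\,r^{n-1}\,\mathrm{d}r,\qquad t\ge 1/2 .
\]
Then $F$ is a continuous strictly increasing bijection of $[1/2,u]$ onto $[0,M]$ (its density is positive there), while $G$ is continuous, non-decreasing, with $G(1/2)=0$ and $G(t)\to M$ as $t\to\infty$. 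I would then \emph{define} $T:=G^{-1}\circ F$ on $(1/2,u)$, where $G^{-1}(y):=\inf\{t\ge 1/2:G(t)\ge y\}$ is the quantile function of $\nu$. Since $F(r)\in(0,M)$ for $r\in(1/2,u)$ and $G$ attains every level below $M$, the infimum is a finite number $>1/2$, so $T$ maps $(1/2,u)$ into $(1/2,\infty)$, and being a composition of non-decreasing functions it is Borel measurable.

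Next I would verify the push-forward identity $T_\#\mu=\nu$, which is exactly the change-of-variables formula asserted in the lemma. Continuity of $G$ gives the equivalence $G^{-1}(y)\le t\iff G(t)\ge y$, hence for every $t\ge 1/2$
\[
\mu\bigl(\{r\in(1/2,u):T(r)\le t\}\bigr)=\mu\bigl(\{r\in(1/2,u):F(r)\le G(t)\}\bigr)=G(t)=\nu\bigl((1/2,t]\bigr),
\]
the middle equality using that $F$ is a homeomorphism onto $[0,M]$ with $F\circ F^{-1}=\mathrm{id}$ (and that the set in question is all of $(1/2,u)$ when $G(t)=M$). Since the half-open intervals $(1/2,t]$ generate the Borel $\sigma$-algebra of $(1/2,\infty)$ and $\mu,\nu$ are finite measures of equal mass, it follows that $T_\#\mu=\nu$, i.e. $\int_S f(r)\,r^{n-1}\,\mathrm{d}r=\int_{(1/2,u)}f(T(r))\,r^{n-1}\,\mathrm{d}r$ for every non-negative measurable $f$.

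Finally, the monotonicity $T(r)\ge r$ rests on the only genuinely substantive observation, the comparison $G(t)\le F(t)$ for all $t\ge 1/2$: indeed $G(t)=\int_{1/2}^{t}\1_S\,r^{n-1}\,\mathrm{d}r\le\int_{1/2}^{t}r^{n-1}\,\mathrm{d}r$, which equals $F(t)$ when $t\le u$, whereas $G(t)\le M=F(t)$ when $t\ge u$ --- morally, $(1/2,u)$ is the "most left-concentrated" set carrying $r^{n-1}$-mass $M$, so any competitor $S$ must place its mass further out. From $G\le F$ we get $\{t:G(t)\ge y\}\subset\{t:F(t)\ge y\}$, hence $G^{-1}(y)\ge F^{-1}(y)$ for all $y\in(0,M)$; taking $y=F(r)$ with $r\in(1/2,u)$ and using $F^{-1}(F(r))=r$ yields $T(r)=G^{-1}(F(r))\ge r$.

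I expect the main obstacle to be purely bookkeeping around the generalized inverse $G^{-1}$ --- the equivalence $G^{-1}(y)\le t\iff G(t)\ge y$, its finiteness on $(0,M)$, and the measurability of $T$ --- all of which are standard one-variable facts; the genuinely problem-specific input is only the comparison $G\le F$. Alternatively one could just cite the construction of the monotone transport map on $\R$ from a standard optimal-transport reference and prove $G\le F$ by hand, but the short self-contained argument above seems preferable in context.
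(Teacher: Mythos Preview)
Your proof is correct and is exactly the approach the paper has in mind: the paper simply asserts that ``the existence of the map $T$ is a consequence of the existence of an optimal transport map for non-atomic probability measures on the real line'' and gives no further details, whereas you carry out that construction explicitly via the quantile map $T=G^{-1}\circ F$ and, in particular, supply the comparison $G\le F$ that yields $T(r)\ge r$, which the paper leaves entirely to the reader.
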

  The existence of the map $T$ is a consequence of the existence of an optimal transport map for non-atomic probability measures on the real line. For each $x \in \s$, we apply this lemma to $S=E_x$, to get a corresponding map $T_x$. Then \eqref{eq:deltaUsubgraph} becomes
  \begin{equation}\label{eq:deltaUsubgraph_bis}
    \uba(E_u)-\uba(E) = \int_{\s}\left(\int_{ (\frac{1}{2},u(x))} \left(A r^{\beta}-AT_x(r)^{\beta}\right)  r^{n-1}\mathrm{d}r\right)\hau(x).
  \end{equation}
  Now let us compute the variation of the Riesz energy $\mathcal{V}_\alpha$ in a similar fashion~:
  \begin{align}
    \V{E_u}-\V{E}&=\int_{E_u}\Phi_{E_u}^\alpha-\int_{E}\Phi_{E}^\alpha \nonumber\\
    &=\int_{E_u}\Phi_{E_u}^\alpha+\int_{E_u}\Phi_{E}^\alpha-\int_{E}\Phi_{E_u}^\alpha-\int_{E}\Phi_{E}^\alpha \nonumber \\
    &=\int_{E_u}(\Phi_{E_u}^\alpha+\Phi_{E}^\alpha)-\int_{E}(\Phi_{E_u}^\alpha+\Phi_{E}^\alpha) \nonumber \\
    &=\int_{\s} \int_{(1/2,u(x))}\left[(\Phi_{E_u}^\alpha+\Phi_{E}^\alpha)(rx)\right. \nonumber\\
    &\qquad\left. {}-(\Phi_{E_u}^\alpha+\Phi_{E}^\alpha)(T_x(r)x) \right]r^{n-1}\mathrm{d}r\hau(x). \label{eq:deltaVsubgraph}
  \end{align}
  To estimate \eqref{eq:deltaUsubgraph_bis} and \eqref{eq:deltaVsubgraph}, we use the two following inequalities:

  \[\forall x\in \s, \forall s\geq r>\frac{1}{2},\; r^\beta-s^\beta \leq -C(\beta)\abs{r-s}, \]
  \[ \quad (\Phi_{E_u}^\alpha+\Phi_{E}^\alpha)(rx)-(\Phi_{E_u}^\alpha+\Phi_{E}^\alpha)(sx) \leq C(n,\alpha) \abs{r-s}, \]
  where the second inequality comes from lemma \ref{lemmaregularity_potential}. With these and \eqref{eq:deltaUsubgraph_bis} and \eqref{eq:deltaVsubgraph}, we get
  \begin{align*}
    &\left(\gamma^{1+\alpha}\V{E_u}+\gamma^{1+\beta}\uba(E_u)\right) - \left(\gamma^{1+\alpha}\V{E}+\gamma^{1+\beta}\uba(E)\right) \\
    &\leq\int_{\s} \int_{(1/2,u(x))}\left(\gamma^{1+\alpha}C(n,\alpha)-\gamma^{1+\beta}AC(\beta)\right)\abs{r-T_x(r)} r^{n-1}\mathrm{d}r\hau(x).
  \end{align*}
  From this inequality we get that if $\gamma$ is large enough (depending only on $n$, $\alpha$, $\beta$, $A$), then
  \begin{equation}\label{eq:U+V_1}
    \gamma^{1+\alpha}\V{E_u}+\gamma^{1+\beta}\uba(E_u)\leq \gamma^{1+\alpha}\V{E}+\gamma^{1+\beta}\uba(E).
  \end{equation}

  \noindent\textit{Step two}. We show that there exists $\epsilon=\epsilon(n,\alpha,\beta,A)>0$, such that for any $\gamma$ large enough, if $\|u\|_{L^\infty(\s)} <\epsilon$, then
  \begin{equation}\label{eq:U+V_2}
    \gamma^{1+\alpha}\V{B}+\gamma^{1+\beta}\uba(B)\leq \gamma^{1+\alpha}\V{E_u}+\gamma^{1+\beta}\uba(E_u).
  \end{equation}
  Remark that by theorem \ref{thm:hausdorff boundary}, the condition $\|u\|_{L^\infty(\s)} <\epsilon$ is satisfied if $\gamma$ has been taken large enough.
  The inequality \eqref{eq:U+V_2} will result from this computational lemma, whose proof is postponed :
  \begin{lemma}\label{lemmataylorexp_u_v}
    Given a measurable function $u:\s \to \R$ with $\|u\|_{L^\infty(\s)} <1$, set for $t\geq 0$
    \[E_t:=\{s(1+tu(x))x, ~ x\in \s,~ s\in[0,1) \}.\]
    Assume that $\abs {E_t}=\abs{B}$. Then for $t$ small enough, depending only on the dimension $n$, we have
    \begin{equation} \label{eqtaylorexp_u}\U{E_t} \geq \U{B}+A\beta \frac{t^2}{2}\nldc{u}-C(n,\beta)t^3\nldc{u},
    \end{equation}
    and
    \begin{align}
      \V{E_t} &\geq \V{B}-\frac{t^2}{2}\left([u]^2_{ \frac{1-\alpha}{2}}-\alpha(n+\alpha) \nldc{u}\right)\nonumber \\
      &\qquad{}-C(n)t^3\left([u]^2_{ \frac{1-\alpha}{2}}+\alpha\V{B}\nldc{u}\right),\label{eqtaylorexp_v}
    \end{align}
    where
    \[[u]^2_{ \frac{1-\alpha}{2}}=\int_{\s \times \s} \frac{\abs{u(x)-u(y)}^2 }{\abs{x-y}^{n-\alpha}} \hau(x)\hau(y).\]
  \end{lemma}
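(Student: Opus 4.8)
The plan is to establish the two expansions by writing both energies as integrals over the sphere of one-dimensional integrals in the radial variable, and then Taylor-expanding in $t$.

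\textbf{Expansion for $\uba$.} First I would use the slicing identity
\[
\U{E_t}-\U{B}=A\int_{\s}\left(\int_0^{1+tu(x)}r^{\beta+n-1}\,\mathrm{d}r-\int_0^1 r^{\beta+n-1}\,\mathrm{d}r\right)\hau(x)
=\frac{A}{\beta+n}\int_{\s}\big((1+tu(x))^{\beta+n}-1\big)\hau(x).
\]
The volume constraint $\abs{E_t}=\abs{B}$ reads $\int_{\s}\big((1+tu(x))^{n}-1\big)\hau(x)=0$, i.e. $t\int u\,\mathrm{d}\mathcal H^{n-1}=-\frac{n-1}{2}t^2\int u^2\,\mathrm{d}\mathcal H^{n-1}+O(t^3\|u\|_{L^2}^2)$. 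Taylor-expanding $(1+tu)^{\beta+n}=1+(\beta+n)tu+\binom{\beta+n}{2}t^2u^2+O(t^3|u|^3)$ and substituting the constraint to kill the linear term, the $t^2$ coefficient becomes $\frac{A}{\beta+n}\left(\binom{\beta+n}{2}-\frac{(\beta+n)(n-1)}{2}\right)\int u^2 = \frac{A\beta}{2}\nldc{u}$. The cubic remainder is bounded by $C(n,\beta)t^3\|u\|_{L^\infty}\nldc u\le C(n,\beta)t^3\nldc u$ since $\|u\|_{L^\infty}<1$. This yields \eqref{eqtaylorexp_u}.

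\textbf{Expansion for $\mathcal V_\alpha$.} This is the harder part and is exactly the computation carried out in \cite{vincent} for the second variation of the Riesz energy at the ball; the strategy is to write
\[
\V{E_t}=\int_{\s\times\s}\left(\int_0^{1+tu(x)}\!\!\int_0^{1+tu(y)}\frac{s^{n-1}\sigma^{n-1}}{\abs{sx-\sigma y}^{n-\alpha}}\,\mathrm{d}s\,\mathrm{d}\sigma\right)\hau(x)\hau(y),
\]
and expand the inner double integral to second order in $t$ around $s=\sigma=1$. The first-order term is a single integral $\propto t\int_{\s}u(x)\Phi_B^\alpha(x)\,\hau(x)$ which, because $\Phi_B^\alpha$ is radial, equals a constant times $t\int u$; this is again eliminated by the volume constraint up to $O(t^2\nldc u)$. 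The second-order term splits into a "diagonal" piece producing $\alpha(n+\alpha)\nldc u$ (from differentiating the radial weights and the kernel along the radial direction) and an "off-diagonal" piece $-[u]^2_{(1-\alpha)/2}$ coming from the mixed second derivative, which is where the Gagliardo-type seminorm $[u]_{(1-\alpha)/2}$ appears — one uses the elementary identity $2u(x)u(y)=u(x)^2+u(y)^2-(u(x)-u(y))^2$ to convert $\int u(x)u(y)\abs{x-y}^{\alpha-n}$ into $\nldc u$-type terms plus $-[u]^2_{(1-\alpha)/2}$, after accounting for the (finite, since $\alpha<n$) integrability of $\abs{x-y}^{\alpha-n}$ over $\s\times\s$. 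The cubic remainder requires controlling third derivatives of the kernel; the contributions are bounded by $C(n)t^3$ times either $[u]^2_{(1-\alpha)/2}$ (from the mixed terms) or $\alpha\V B\nldc u$ (from the purely radial terms), again using $\|u\|_{L^\infty}<1$ to absorb extra powers of $u$. Collecting the signs and using that the quadratic term enters with a minus sign gives the lower bound \eqref{eqtaylorexp_v}.

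\textbf{Main obstacle.} The delicate point is the $\mathcal V_\alpha$ expansion: one must Taylor-expand the singular kernel $\abs{sx-\sigma y}^{\alpha-n}$ with respect to the radial variables $s,\sigma$ while keeping uniform control of the error, using that the resulting singular integrals in $x,y$ are governed precisely by the seminorm $[u]^2_{(1-\alpha)/2}$ and by $\nldc u$, and that all of this is legitimate because $\alpha<n$ makes $\abs{x-y}^{\alpha-n}$ integrable on $\s\times\s$. Since $X\cdot\nu_B$ corresponds here to $u$ up to lower-order terms, this is structurally the same computation as the second-variation formula quoted in Proposition \ref{prop:stability}, so I would follow \cite[section 6]{vincent} closely and merely record the cubic remainder estimates, which are routine once the algebraic structure of the quadratic term is in place.
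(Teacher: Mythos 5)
Your proposal takes essentially the same route as the paper: the $\uba$ bound is obtained by the identical slicing in spherical coordinates, Taylor expansion of $(1+tu)^{n+\beta}$, and substitution of the volume constraint to eliminate the linear term, and your algebra for the $t^2$ coefficient $\tfrac{A\beta}{2}\nldc{u}$ checks out. For the $\mathcal V_\alpha$ bound, the paper simply invokes \cite[equation (5.20)]{vincent} (observing that the $C^1$ hypothesis there can be weakened to an $L^\infty$ one), and your sketch — expanding the kernel in the radial slices, using the volume constraint to cancel the linear term, and the polarization identity $2u(x)u(y)=u(x)^2+u(y)^2-(u(x)-u(y))^2$ to produce the seminorm $[u]^2_{(1-\alpha)/2}$ — is exactly the computation done in \cite{vincent}, which you explicitly defer to, so the two proofs coincide in substance.
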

  \noindent Indeed for $\alpha>1$, we have
  \begin{align*}
    [u]^2_{ \frac{1-\alpha}{2}}&\leq \int_{\s \times \s} \frac{2(\abs{u(x)}^2+\abs{u(y)})^2 }{\abs{x-y}^{n-\alpha}} \hau(x)\hau(y)\\
    &=4\int_{\s \times \s} \frac{\abs{u(x)}^2}{\abs{x-y}^{n-\alpha}} \hau(x)\hau(y) \\
    &=4\int_{\s}\left(\int_{\s} \frac{\hau(y)}{\abs{x-y}^{n-\alpha}}\right)\abs{u(x)}^2 \hau(x) \\
    &=C(n,\alpha)\int_{\s} \abs{u(x)}^2 \hau(x) \\
    &=C(n,\alpha)\nldc{u},
  \end{align*}
  so that \eqref{eqtaylorexp_v} gives
  \[\V{E_t} \geq \V{B}-\frac{t^2}{2}C(n,\alpha)\nldc{u}-C(n,\alpha)t^3\nldc{u}.\]
  This implies that for $t$ small enough, depending only on $n$ and $\alpha$, we have
  \[\V{E_t} \geq \V{B}-t^2C(n,\alpha)\nldc{u}.\]
  Likewise, we get from \eqref{eqtaylorexp_u} that for $t$ small enough, depending only on $n$, $\beta$ and $A$, we have
  \[\U{E_t} \geq \U{B} +t^2C(n,\beta,A)\nldc{u}.\]
  These last two inequalities imply that there exists $\epsilon=\epsilon(n,\alpha,\beta,A)>0$, such that if $\|u\|_{L^\infty(\s)} <\epsilon$, then
  \begin{multline*}\gamma^{1+\alpha}\V{B}+\gamma^{1+\beta}\uba(B)\leq \gamma^{1+\alpha}\V{E_u}+\gamma^{1+\beta}\uba(E_u)\\
    +\left(\gamma^{1+\alpha}C(n,\alpha,\beta,A)\nldc{u}-\gamma^{1+\beta}C(n,\alpha,\beta,A)\nldc{u}\right),
  \end{multline*}
  which in turn implies \eqref{eq:U+V_2} for $\gamma$ large enough.

  The estimate \eqref{eq:U+B_decrease} is now a consequence of \eqref{eq:U+V_1} and \eqref{eq:U+V_2}. The theorem results from \eqref{eq:U+B_decrease} and the isoperimetric inequality.
\end{proof}

\begin{proof}[Proof of lemma \ref{lemmataylorexp_u_v}]
  The proof of \eqref{eqtaylorexp_v} is given in \cite[equation (5.20)]{vincent}, under the hypothesis ${\ncu{u}\leq 1}$ instead of $\|u\|_{L^\infty(\s)}\leq 1$. However it is clear from the proof that it holds also for $\|u\|_{L^\infty(\s)}\leq 1$ only. (The reason why it was stated with the stronger hypothesis $\ncu{u}\leq 1$ is because it is needed to get the corresponding estimate for the perimeter.)

  Let us prove \eqref{eqtaylorexp_u}. Using spherical coordinates, we can compute
  \begin{align}
    \U{E_t} & =\int_{\s}\int_{0}^{1+tu(x)}A\abs{rx}^\beta r^{n-1}\mathrm{d}r\hau (x) \nonumber \\
    & =\int_{\s}\int_{0}^1A(1+tu(x))^{n+\beta}r^{n+\beta-1}\mathrm{d}r\hau (x) \nonumber\\
    & =\int_{\s}A\frac{(1+tu)^{n+\beta}}{n+\beta}\hau. \nonumber
  \end{align}
  Setting $h(t):=\int_{\s}(1+tu)^{n+\beta} \hau$, we then have $\U{E_t}-\U{B}=\frac{A}{n+\beta}(h(t)-h(0))$. Let us proceed to a Taylor expansion of $h$. We have
  \[(1+tu)^{n+\beta}\geq1+(n+\beta)tu+(n+\beta)(n+\beta-1)\frac{(tu)^2}{2}-C(n,\beta)\frac{(tu)^3}{3},\]
  So
  \begin{align}
    \frac{1}{n+\beta}(h(t)-h(0))  & \geq \int_{\s} tu\hau +(n+\beta-1)\int_{\s} \frac{(tu)^2}{2}\hau-C(n,\beta)t^3 \int_{\s} u^3\hau \nonumber \\
    &\geq \int_{\s} tu\hau +(n+\beta-1)\int_{\s} \frac{(tu)^2}{2}\hau -C(n,\beta)t^3 \nldc{u}. \label{taylorexph}
  \end{align}
  Now we use the volume constraint $\abs{E_t}=\abs{B}$ to estimate $\int tu$.
  The volume constraint can be expressed as
  \[\int_{\s} (1+tu)^n\hau=\int_{\s} 1\hau,\]
  and so
  \begin{align}
    \int_{\s}tu\hau &=\int_{\s}\left(tu-\frac{1}{n}\left((1+tu)^n-1\right)\right)\hau \nonumber\\
    &=-\sum_{k=2}^{n}\frac{1}{n}\binom{n}{k}\int_{\s}(tu)^k\hau \nonumber \\
    &\geq -\frac{n-1}{2} \int_{\s} (tu)^2 \hau-C(n)t^3 \nldc{u}
  \end{align}
  This with \eqref{taylorexph} gives \eqref{eqtaylorexp_u}.
\end{proof}

%%%%%%%%%%%%%%%%%%%%%%%%%%%%%%%%%%%%%%%%%%%%%%%%%%%%%%%%%%%%%
%%%%%%%%%%%%%%%%%%%%%%%%%%%%%%%%%%%%%%%%%%%%%%%%%%%%%%%%%%%%

\section{Numerical minimization}\label{sec:numerical}
In this section we present our method and results for the numerical minimization
of the variational problem \eqref{eq:isoperimetric problem}, the constant $A\geq0$ being potentially $0$. In particular we apply this method with $A=0$ to give a numerical answer to the two questions raised at the end of the introduction.

\subsection{Method of the numerical minimization}\label{subsec:method}
We present a series of three modifications of the variational problem \eqref{eq:isoperimetric problem}
to arrive at a finite dimensional variational problem that can be easily numerically solved. All steps are justified by a $\Gamma$-convergence and compactness result. We refer to \cite{gamma_convergence} for definition and properties of $\Gamma$-convergence.

\emph{Step one} is standard when dealing with the perimeter. We use the classical Modica-Mortola theorem to relax the functional on sets, \text{i.e.} charateristic functions, into a functional on functions taking values in $[0,1]$. This allows us to use the vector space structure of functions and, after disctretization (\emph{step three}), usual optimization tools for functionals on $\R^d$.

\emph{Step two} is the key step for dealing with the non-local term $\mathcal{V}_\alpha$. We replace the ambient space $\rn$ with a large square with periodic boundary conditions, whose size is a new relaxation parameter. Then we can approximate the non-local term $\mathcal{V}_\alpha$ by a simple expression in Fourier variable.

In \emph{step three}, we discretize the problem by considering only trigonometric functions with frequencies lower than some integer $N$, and by computing the integral terms with riemann sums.
\begin{terminology}
  We say that a family of functionals $(\mathcal{F}_\epsilon)_{\epsilon>0}$ defined
  on a metric space $X$ enjoys property (C) (for compactness) if for any
  family $(u_\epsilon)_{\epsilon>0}$ of elements of $X$ such that $(F_\epsilon(u_\epsilon))_{\epsilon>0}$
  is bounded, there is a subsequence of $(u_\epsilon)_{\epsilon>0}$ that converges in $X$.
\end{terminology}
\noindent If a family of functionals $(\mathcal{F}_\epsilon)_{\epsilon>0}$ enjoys property
(C) and $\Gamma$-converges to a limit functional $\mathcal{F}$ when $\epsilon$ goes to $0$, then we know that for $\epsilon$ small enough, minimizers of $\mathcal{F}_\epsilon$ are close to minimizers of $\mathcal{F}$. Let us now describe and justify each step precisely.

\emph{Step one.} We use the classical Modica-Mortola theorem to replace
this problem on subsets of $\rn$, \text{i.e.} functions taking only values $0$ or $1$,
with a problem on functions taking any value between $0$ and $1$. More precisely, given a (large) smooth open bounded set $\Omega$ and a (small) $\epsilon>0$, we define the set $X$, and the functionals $\mathcal{F}_\epsilon:X \to \overline{\R}$ and $\mathcal{F}:X \to \overline{\R}$ by
\begin{equation*}
  X:=\{u \in L^1(\Omega,[0,1]) : \int u = m \},
\end{equation*}
\begin{align}
  \mathcal{F}_\epsilon(u) &=
  \begin{cases}
    \epsilon\int_{\rn} \abs{\nabla u}^2+\frac{1}{\epsilon} \int_{\rn} W(u)+\V{u} + A\int_{\rn} u(x) \abs{x}^\beta \mathrm{d}x \\
    \text{if } u \in H^1(\rn),\\
    +\infty\\
    \text{otherwise},
    \end{cases} \\
    \mathcal{F}(u) &=
    \begin{cases}
      P(\{u=1\})+\V{u} + A\int_{\rn} u(x) \abs{x}^\beta \mathrm{d}x \\
      \text{if $u$ only takes values $0$ and $1$,}\\
      +\infty \\
      \text{otherwise,}
    \end{cases}
  \end{align}
  where we have used the natural notation $\V{u}=\int_{\rn \times \rn} \frac{u(x)u(y)}{\abs{x-y}^{n-\alpha}}\mathrm{d}x\mathrm{d}y$,
  and $W$ is the following double well potential on $[0,1]$: $W(x)=x(1-x)$.
  Then from the Modica-Mortola theorem and the fact that the two last terms of the functionals $\mathcal{F}_\epsilon$ and $\mathcal{F}$ are continuous on $X$, we have
  \begin{equation*}
    \mathcal{F}_\epsilon \xrightarrow[\epsilon \to 0]{\Gamma}\mathcal{F},
    \quad \text{and $(\mathcal{F}_\epsilon)$ enjoys property (C).}
  \end{equation*}
  Note that considering functions on a bounded open set $\Omega$ is not restrictive provided that $\Omega$ is large enough, as minimizers of \eqref{eq:isoperimetric problem} are necessarily bounded.

  \emph{Step two.} We wish to reduce the domain to a (large) square with periodic boundary
  conditions, \textit{i.e.} a torus. Indeed, the non-local repulsive term has a simple expression in Fourier variable :
  \begin{equation}\label{eq:valpha_fourier}
    \V{u}=\frac{C(n,\alpha)}{(2\pi)^n}\int_{\rn} \abs{\xi}^{-\alpha}\abs{\Hat{u}(\xi)}^2\mathrm{d}\xi,
  \end{equation}
  with $\Hat{u}$ the Fourier transform of $u$ and
  $C(n,\alpha):=\frac{2^\alpha\pi^{\frac{n}{2}}\Gamma(\frac{\alpha}{2})}{\Gamma(\frac{n-\alpha}{2})}$,
  and $\Gamma$ the usual gamma function. This can be seen by noting that
  $\V{u}=\int u \mathcal{I}_\alpha(u)$ with $\mathcal{I}_\alpha(u)$ the Riesz
  potential of u, and using the Fourier expression of the Riesz potential (see
  \cite[Part V]{singular}). Thus we will approximate $\V{u}$ by
  \begin{equation}\label{eq:vat}
    \vat(u):= \frac{C(n,\alpha)}{T^n}\sum_{k\in \Z^n \setminus \{0\}}\abs{\frac{2k\pi}{T}}^{-\alpha}\abs{c_{k,T}(u)}^2,
  \end{equation}
  where $c_{k,T}(u):=\int_{[-T/2;T/2]^n} u(x) e^{\frac{-2ik\pi x}{T}}\mathrm{d}x$ is
  the $k$-th Fourier coefficient of $u$ on $[-T/2;T/2]^n$, for some (large) $T>0$.
  More precisely, let us define the functional $\mathcal{F}_{\epsilon,T} : X \to \overline{\R}$ by
  \begin{equation*}
    \mathcal{F}_{\epsilon,T}(u)=
    \begin{cases}
      \epsilon\int\abs{\nabla u}^2+\frac{1}{\epsilon} \int W(u)+\vat(u) + A\int u(x) \abs{x}^\beta \mathrm{d}x \\
      \text{if}\quad u\in H^1(\rn),\\
      +\infty\\
      \text{otherwise}.
    \end{cases}
  \end{equation*}
  Then we have
  \begin{equation}\label{eq:grandtore}
    \mathcal{F}_{\epsilon,T} \xrightarrow[T \to 0]{\Gamma}\mathcal{F_\epsilon},
    \quad \text{and $(\mathcal{F}_{\epsilon,T})_{T>0}$ enjoys property (C).}
  \end{equation}
  We omit the proof of \eqref{eq:grandtore}, as it presents no major difficulty. It relies mostly on convergence of Riemann sums. However, we emphasize the following remark:
  \begin{rmk}
    For property $(C)$ to be valid, it is necessary to assume that all functions are supported in a given bounded set $\Omega$ (see section \ref{subsec:numerical results} for further comments).
  \end{rmk}

\emph{Step three.} As the final step, we discretize the variational problem. Let us first extend $\mathcal{F}_{\epsilon,T}$ to the functions $u\in H^1([-T/2;T/2]^n)$ that are not supported in $\Omega$ by setting $\mathcal{F}_{\epsilon,T}(u)=+\infty$ in this case. For
$N\in 2 \N$ large, instead of considering the whole space $H^1([-T/2;T/2]^n)$, we only consider the space
\begin{multline}E_N = \{u \in \mathrm{Vect} (e^{\frac{2i\pi}{T}k \cdot x})_{k \in \{-\frac{N}{2}+1,...,\frac{N}{2}\}^n} : \\ \forall j \in \{-\frac{N}{2}+1,\ldots,\frac{N}{2}\}^n, u(jT/N) \in [0,1], \\ u(jT/N)=0 \quad\text{if}\quad jT/N \notin \Omega, \quad\text{and}\quad \int u=m\}.
\end{multline}
For $u \in E_N$, we set
\[\mathcal{W}_N(u) = (T/N)^n\sum\limits_{j \in \{-N/2+1,...,N/2\}^n}  W(u(jT/N)),\]
and
\[\mathcal{U}_{\beta,A,N}(u) = A(T/N)^n\sum\limits_{j \in \{-N/2+1,...,N/2\}^n}  u(jT/N))\abs{jT/N}^{\beta}.\]
Then we define the functional $\mathcal{F}_{\epsilon,T,N} : H^1([-T/2;T/2]^n) \to \overline{\R}$ by
\begin{equation*}
  \mathcal{F}_{\epsilon,T,N}(u)=
  \begin{cases}
    \epsilon\int \abs{\nabla u}^2+\frac{1}{\epsilon} \mathcal{W}_N(u)+\vat(u) + \mathcal{U}_{\beta,A,N} \\
    \text{if}\quad u \in E_N,\\
    +\infty\\
    \text{otherwise}.
  \end{cases}
\end{equation*}
We have in the sense of the weak $H^1$ topology,
\begin{equation}\label{eq:discretize}
  \mathcal{F}_{\epsilon,T,N} \xrightarrow[N \to \infty]{\Gamma}\mathcal{F}_{\epsilon,T} \quad \text{and $(\mathcal{F}_{\epsilon,T,N})$ enjoys property (C).}
\end{equation}

In the proof of \eqref{eq:discretize}, we will use the following technical lemma, which shows that a triogonometric function whose frequencies are lower than $N$ is well represented by its values on a grid with step size $1/N$.

\begin{lemma}\label{lemma:riemann sum trigo}
  Let $(u_N)$ be a converging sequence in $L^2([0;1]^n)$, such that for every
  $N\geq 0$, $u_N \in E_N$. Then for any bounded uniformly continuous functions $\phi : \R \to \R$ and $\psi : [0,1]^n \to \R$, we have
  \begin{equation}\nonumber \label{eq:riemann sum trigo}
  \abs{\frac{1}{N^n}\sum_{j \in \frac{1}{N}{\Z}^n\cap[0;1)^n} \psi(j)\phi(u_N(j))-\int_{[0;1]^n}\psi(x)\phi(u_N(x))\mathrm{d}x}\tend{N}{\infty}0.
  \end{equation}
\end{lemma}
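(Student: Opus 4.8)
\textbf{Proof proposal for Lemma \ref{lemma:riemann sum trigo}.}

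The plan is to exploit the fact that $u_N$ lives in the finite-dimensional space $E_N$ of trigonometric polynomials with frequencies bounded by $N/2$: such a function is determined by, and controlled by, its values on the grid $\frac{1}{N}\Z^n \cap [0,1)^n$ — this is essentially the statement that the discrete Fourier transform is, up to normalization, an isometry. Concretely, I would first record the \emph{aliasing/quadrature identity}: for $u,v \in E_N$ one has $\frac{1}{N^n}\sum_{j} u(j)\overline{v(j)} = \int_{[0,1]^n} u(x)\overline{v(x)}\,\mathrm{d}x$, because the product $u\bar v$ is a trigonometric polynomial of degree at most $N$ in each variable and the grid of $N^n$ points integrates such polynomials exactly (the only surviving frequency is $0$). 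In particular, taking $v = u_N$, the discrete $\ell^2$ norm of the sample vector equals $\|u_N\|_{L^2([0,1]^n)}$; since $(u_N)$ converges in $L^2$, these norms are bounded, say by $M$.

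Next I would split the quantity to be estimated by introducing the genuine Riemann sum of the \emph{continuous} integrand. Write the difference as
\[
\Big(\tfrac{1}{N^n}\sum_j \psi(j)\phi(u_N(j)) - \int \psi\,\phi(u_N)\Big)
\]
and compare $\psi(j)\phi(u_N(j))$ against the average of $\psi(x)\phi(u_N(x))$ over the cube $Q_j^N := j + [0,1/N)^n$. The error contributed by $Q_j^N$ is $\int_{Q_j^N}\big[\psi(j)\phi(u_N(j)) - \psi(x)\phi(u_N(x))\big]\mathrm{d}x$. Using boundedness of $\phi,\psi$ and uniform continuity of $\psi$, the part coming from replacing $\psi(x)$ by $\psi(j)$ is $O(\omega_\psi(\sqrt n/N))\to 0$ uniformly. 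The remaining, and delicate, term is $\int_{Q_j^N}\psi(j)\big[\phi(u_N(j))-\phi(u_N(x))\big]\mathrm{d}x$, summed over $j$; here one cannot use uniform continuity of $\phi$ naively because $u_N(x)$ need not be close to $u_N(j)$ pointwise. Instead I would bound it, via uniform continuity of $\phi$ together with its boundedness, by $\sum_j \|\psi\|_\infty \int_{Q_j^N}\big(\omega_\phi(\delta) + 2\|\phi\|_\infty \mathbf 1_{\{|u_N(x)-u_N(j)|>\delta\}}\big)\mathrm{d}x$ for any $\delta>0$; the first piece is $\|\psi\|_\infty\,\omega_\phi(\delta)$, which is small for $\delta$ small, and by Chebyshev the second is $\le \frac{2\|\psi\|_\infty\|\phi\|_\infty}{\delta^2}\sum_j\int_{Q_j^N}|u_N(x)-u_N(j)|^2\mathrm{d}x$.

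So the crux — and the step I expect to be the main obstacle — is a \emph{discrete Poincaré / oscillation estimate}: $\sum_j \int_{Q_j^N}|u_N(x)-u_N(j)|^2\,\mathrm{d}x \to 0$ as $N\to\infty$. This is where the band-limited structure of $u_N$ is essential. I would prove it by expanding $u_N = \sum_{|k|_\infty\le N/2} a_k e^{2i\pi k\cdot x}$ and noting that on the cube $Q_j^N$ the oscillation of the mode $e^{2i\pi k\cdot x}$ around its value at $j$ is $|e^{2i\pi k\cdot(x-j)}-1|\le 2\pi|k|\,|x-j|\le 2\pi|k|\sqrt n/N$; by the quadrature identity and orthogonality this yields $\sum_j\int_{Q_j^N}|u_N(x)-u_N(j)|^2\,\mathrm{d}x \le \frac{C(n)}{N^2}\sum_k |k|^2 |a_k|^2$. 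The worry is that $\sum_k|k|^2|a_k|^2$ — i.e. $\|\nabla u_N\|_{L^2}^2$ up to constants — could blow up like $N^2$, killing the gain. This is handled by observing that in the intended application (the $\Gamma$-convergence proof for \eqref{eq:discretize}) $\epsilon\|\nabla u_N\|_{L^2}^2$ is part of the bounded energy, so $\|\nabla u_N\|_{L^2}$ is bounded; alternatively one states the lemma with the extra hypothesis that $(u_N)$ is $H^1$-bounded — which is exactly the regime of \eqref{eq:discretize}, where convergence is in weak $H^1$. With $\sum_k|k|^2|a_k|^2$ bounded, the oscillation sum is $O(1/N^2)\to 0$, we then send $N\to\infty$ and afterwards $\delta\to 0$, and the three error contributions all vanish, completing the proof.
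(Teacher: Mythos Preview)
The paper states this lemma without proof, so there is no argument to compare against; the large block of blank lines after the statement suggests a proof may have been excised. Your proposal therefore has to be judged on its own.

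Your overall scheme is sound: the quadrature identity $\frac{1}{N^n}\sum_j u(j)\overline{v(j)}=\int u\bar v$ for $u,v\in E_N$ is correct (the product has frequencies in $\{-N+1,\dots,N-1\}^n$, so only $k=0$ survives the grid sum), the splitting over the cubes $Q_j^N$ is the right move, and the $\delta$-truncation combined with Chebyshev reduces everything to the oscillation quantity $\Sigma_N:=\sum_j\int_{Q_j^N}|u_N(x)-u_N(j)|^2\,\mathrm{d}x$. Your Fourier computation of $\Sigma_N$ is also right: using the grid orthogonality one gets exactly
\[
\Sigma_N \;=\; N^n\sum_{k}|a_k^{(N)}|^2\int_{[0,1/N)^n}\bigl|e^{2i\pi k\cdot y}-1\bigr|^2\,\mathrm{d}y.
\]

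The gap is that you then stop at the crude bound $\Sigma_N\le C(n)N^{-2}\|\nabla u_N\|_{L^2}^2$ and conclude you need an $H^1$ bound not present in the hypotheses. This is unnecessary. Split frequencies at a fixed level $M$: for $|k|\le M$ use $|e^{2i\pi k\cdot y}-1|^2\le 4\pi^2 n M^2/N^2$, and for $|k|>M$ use the trivial bound $|e^{2i\pi k\cdot y}-1|^2\le 4$. This gives
\[
\Sigma_N \;\le\; \frac{C(n)M^2}{N^2}\,\|u_N\|_{L^2}^2 \;+\; 4\sum_{|k|>M}|a_k^{(N)}|^2.
\]
The first term tends to $0$ for fixed $M$ because $\|u_N\|_{L^2}$ is bounded. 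For the second, $L^2$-convergence $u_N\to u$ gives $\sum_k|a_k^{(N)}-\hat u(k)|^2\to 0$, hence $\limsup_N\sum_{|k|>M}|a_k^{(N)}|^2\le 2\sum_{|k|>M}|\hat u(k)|^2$, which vanishes as $M\to\infty$. So $\Sigma_N\to 0$ follows from the stated $L^2$ convergence alone, and no extra $H^1$ hypothesis is needed. With this fix your argument proves the lemma as written.
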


\begin{proof}[Proof of \eqref{eq:discretize}]
  First we prove property (C). Given a sequence $(u_N)$ such that for any $N$, $u_N \in E_N$, and $(\mathcal{F}_{\epsilon,T,N}(u_N))$ is bounded, it is easy to show that $(u_N)$ converges weakly to a function $u\in H^1([-T/2;T/2]^n)$, such that $\int u = m$. We are left to show that $u$ takes its values in the interval
  $[0,1]$. But this is a consequence of lemma \ref{lemma:riemann sum trigo} applied to a sequence of functions $(\phi_i)$ that converges from above to the indicator function of $[0,1]$, and $\psi=1$.
  As for the $\Gamma$-convergence, the only problematic terms are $\mathcal{W}_N(u)$ and $\mathcal{U}_{\beta,A,N}(u)$. They can also be taken care of with lemma \ref{lemma:riemann sum trigo}.
\end{proof}

\subsection{Numerical results}\label{subsec:numerical results}
In dimension $n=3$ and for $\alpha=2$, R. Choksi and M. Peletier conjectured the following (see \cite[Conjecture 6.1]{diblock}):
\begin{conjecture}\label{con:only balls1}
  As long as there is a minimizer in \eqref{eq:liquid drop}, it is a ball. Also, when there is no minimizer, the infimum of the energy is attained by a finite number of balls of the same volume, infinitely far away from each other.
\end{conjecture}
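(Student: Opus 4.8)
\emph{Proof proposal.} The conjecture consists of two coupled assertions: \textbf{(a)} a rigidity statement --- whenever the infimum in \eqref{eq:liquid drop} is attained, every minimizer is a ball --- and \textbf{(b)} a description of the infimum when it is not attained. Write $\mathcal{E}:=P+\mathcal{V}_\alpha$ for the liquid-drop energy and $e(v):=\mathcal{E}(B[v])=P(B[v])+\V{B[v]}=c_nv^{(n-1)/n}+d_{n,\alpha}v^{(n+\alpha)/n}$ for the energy of a single ball of volume $v$; then \textbf{(b)} asserts that $\inf_{\abs{E}=m}\mathcal{E}(E)=\min_{k\in\N^*}k\,e(m/k)$ and that, along a minimizing sequence, the mass separates into $k^*$ balls of volume $m/k^*$ driven infinitely far apart, so that their mutual Riesz interaction disappears. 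The plan is to establish \textbf{(a)} and then derive \textbf{(b)} from it by a concentration--compactness argument; the hard part will be \textbf{(a)}, specifically an Alexandrov-type rigidity for the nonlocal overdetermined problem.

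\emph{Deriving \textbf{(b)} from \textbf{(a)}.} Run a Lions-type dichotomy on a minimizing sequence $(E_k)$, $\abs{E_k}=m$. Vanishing is excluded because a sequence of sets of fixed volume whose density escapes to $0$ has perimeter tending to $+\infty$ (relative isoperimetric inequality on a grid of cubes), contradicting the bound $P(E_k)\le\mathcal{E}(E_k)\le e(m)+o(1)$. In the splitting alternative, $E_k$ decomposes, up to an error vanishing in measure, into pieces $E_k^{(1)},E_k^{(2)}$ with $\abs{E_k^{(i)}}\to m_i>0$, $m_1+m_2=m$, at diverging distance so that $\int_{E_k^{(1)}\times E_k^{(2)}}\abs{x-y}^{\alpha-n}\to 0$; lower semicontinuity of each term then gives $\inf_{\abs{E}=m}\mathcal{E}(E)\ge\inf_{\abs{F}=m_1}\mathcal{E}(F)+\inf_{\abs{F}=m_2}\mathcal{E}(F)$. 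Iterating, and using that a piece of very small volume $v$ is strictly more expensive per unit mass than absorbing it into a larger piece (since $e(v)/v\sim c_nv^{-1/n}\to+\infty$ while $e'$ is bounded on bounded ranges), the iteration terminates after finitely many steps with pieces of volumes $v_1,\dots,v_N$ summing to $m$, each of which is the limit of a non-splitting, non-vanishing minimizing sequence, hence an actual minimizer at mass $v_i$. By \textbf{(a)} each is a ball $B[v_i]$, so the infimum equals $\min\{\sum_ie(v_i):\sum_iv_i=m\}$. A one-dimensional optimization --- using that $e$ is concave for small $v$ and convex for large $v$, so two pieces in the concave range can be merged to advantage while for a fixed number $k$ of pieces in the convex range Jensen forces them equal --- identifies this minimum as $\min_kk\,e(m/k)$, which is \textbf{(b)}. (The existence of the finite decomposition into ``generalized minimizers'' is by now standard for this model; the genuinely new input is that each piece is a ball, i.e.\ \textbf{(a)} at the sub-masses $v_i$, together with the equal-volume reduction.)

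\emph{Towards \textbf{(a)}.} Let $E$ be a minimizer at mass $m$. It is bounded and a $(\Lambda,r_0)$-minimizer of the perimeter, so $\partial E$ is $C^{1,\gamma}$ off a closed singular set of Hausdorff dimension $\le n-8$ (empty for $n=3$); bootstrapping the Euler--Lagrange equation $H_{\partial E}+2\,\Phi_E^\alpha=\lambda$, with $\Phi_E^\alpha$ at least Hölder and $C^1$ when $\alpha>1$ (cf.\ lemma \ref{lemmaregularity_potential}), makes $\partial E$ as regular as the kernel allows. Next, $E$ is connected: if $E$ had two (indecomposable) components, translating one far from the other would keep $P$ unchanged and strictly decrease $\mathcal{V}_\alpha$ (the cross term $\to 0$ from a positive value), contradicting minimality --- so either the infimum is realized by a connected set, or it is not realized and we are in case \textbf{(b)}. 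It therefore remains to prove that a \emph{connected} bounded smooth set satisfying $H_{\partial E}+2\,\Phi_E^\alpha=\lambda$ is a ball.

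\textbf{This last step is the main obstacle, and the reason the statement is only a conjecture.} Without the nonlocal term it is exactly Alexandrov's soap-bubble theorem. The natural attack is Serrin's moving-plane method: the Riesz kernel $\abs{x-y}^{\alpha-n}$ is strictly decreasing in $\abs{x-y}$, so reflecting $E$ across a hyperplane produces, on the reflected cap, a potential comparable in the favourable direction to $\Phi_E^\alpha$; fed into the equation for $H_{\partial E}$ together with the boundary-point lemma, this should force $E$ to be symmetric in every direction about a common centre, hence a ball. The difficulty is that $\Phi_E^\alpha$ is genuinely nonlocal: its monotonicity under reflection is not pointwise but must be integrated against the unknown $E$ itself, and for masses near the existence threshold the $\mathcal{V}_\alpha$ term is not a small perturbation, so the perturbative stability argument behind Proposition \ref{thm:small mass} --- which yields \textbf{(a)} only for $m<m_0$ --- no longer applies. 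A realistic intermediate target is to prove \textbf{(a)} for \emph{all} $m$ below the existence threshold in the regime $\alpha>1$, where $\Phi_E^\alpha\in C^1$ and the reflection/comparison estimates are cleanest; combined with the scheme above this would settle the conjecture on that range. The full statement, and in particular the physical case $n=3$, $\alpha=2$, we leave open, supported only by the numerical evidence of Section \ref{sec:numerical}.
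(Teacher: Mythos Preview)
The statement you are attempting is labelled \emph{Conjecture} in the paper, and the paper does \emph{not} prove it. The authors only recall it (attributing it to Choksi--Peletier), note that the case $\alpha$ close to $n$ is essentially a theorem of Bonacini--Cristoferi, and then offer numerical evidence in dimension $2$. There is therefore no ``paper's own proof'' to compare your proposal against.

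Your proposal is not a proof either, and you say so explicitly: the decisive step~\textbf{(a)} --- that a connected critical set of $P+\mathcal{V}_\alpha$, i.e.\ a smooth bounded set satisfying $H_{\partial E}+2\Phi_E^\alpha=\lambda$, must be a ball --- is left open, and you correctly identify why the moving-plane argument does not go through routinely (the reflected potential depends on all of $E$, and near the existence threshold the nonlocal term is not perturbative). That is precisely the obstruction that keeps the statement a conjecture; your outline does not close it.

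On the rest of your sketch: the reduction of \textbf{(b)} to \textbf{(a)} via concentration--compactness and the ``generalized minimizer'' decomposition is by now standard for this model and is essentially correct as stated. Two points deserve tightening. First, the equal-volume step: your concave/convex remark on $e(v)=c_nv^{(n-1)/n}+d_{n,\alpha}v^{(n+\alpha)/n}$ is right in spirit, but to conclude that the optimal configuration among $\sum_i v_i=m$ is $k$ equal pieces one needs a short but genuine argument (e.g.\ compare any two unequal pieces and show that either merging or equalizing strictly decreases the sum, using the single inflection point of $e$); this is known, but your one-line justification is not yet a proof. Second, your connectedness paragraph is slightly garbled: the correct logic is simply that a minimizer cannot have two components, since sliding one to infinity strictly lowers $\mathcal{V}_\alpha$ while fixing $P$, contradicting minimality; the clause ``or it is not realized and we are in case \textbf{(b)}'' is superfluous once you have assumed $E$ is a minimizer. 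None of this, however, touches the real gap, which is the nonlocal Alexandrov step in \textbf{(a)}.
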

In any dimension $n\geq 2$, for $\alpha$ close enough to $n$, this is mostly a theorem of M. Bonacini and R. Cristoferi (see \cite[Theorem 2.12]{bonacini_cristoferi}). Our numerical results suggest that in dimension $2$, the conjecture holds for any $\alpha \in (0,2)$ (\textit{i.e.} the hole admissible range). Note that if the conjecture holds, we can compute explicitely the mass $m_1(n,\alpha)>0$ such that there is a minimizer in \eqref{eq:liquid drop} if and only if $m<m_1$.
Indeed, given $m>0$, let us set
\[
f(m)=P(B[m])+\V{B[m]}.
\]
Then define $m_k$ as the unique solution of
\[kf(\frac{m}{k})=(k+1)f(\frac{m}{k+1}).
\]
Note that $kf(\frac{m}{k})$ is the energy of $k$ balls of volume $m/k$, infinitely far away from each other. Using the homogeneity of $P$ and $\mathcal{V}_\alpha$ we find that
\begin{equation}\label{eq:critical mass}
  m_k = \abs{B}\left(\frac{(k+1)^{\frac1n}-k^{\frac1n}}{(k)^{-\frac{\alpha}{n}}-(k+1)^{-\frac{\alpha}{n}}}\frac{P(B)}{\V{B}}\right)^{\frac{n}{1+\alpha}}.
\end{equation}
We also set $m_0=0$. The sequence $(m_k)$ is increasing. Then an equivalent formulation of conjecture \ref{con:only balls1} is:
\begin{conjecture}\label{con:only balls 2}
  In dimension $n=2$, if $m \in [m_{k-1},m_k]$,
  \[ \inf_{E\subset \rn, \abs{E}=m}  P(E)+\V{E} = kf(\frac{m}{k}).
  \]
  In particular, as long as there is a minimizer in \eqref{eq:liquid drop}, it is a ball. When there is no minimizer, in some sense an optimal set is given by $k$ balls of the same volume infinitely far from each other.
\end{conjecture}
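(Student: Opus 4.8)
Write $e(m):=\inf\{P(E)+\V{E}:E\subset\rn,\ \abs{E}=m\}$ for the energy in \eqref{eq:liquid drop}, and $f(m):=P(B[m])+\V{B[m]}$. The strategy splits into a soft part (a fragmentation formula for $e$ together with an elementary scalar optimization, from which \eqref{eq:critical mass} drops out) and one hard part (a global rigidity statement for planar minimizers). First I would establish the concentration--compactness description of minimizing sequences for \eqref{eq:liquid drop}: up to translations, such a sequence splits into finitely many uniformly bounded pieces drifting mutually to infinity, the Riesz cross-interaction between distinct pieces vanishing in the limit, and each piece converging in $L^1_{\mathrm{loc}}$ to an honest minimizer of its own mass. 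Combining lower semicontinuity of $P$ and of $\V{\cdot}$ restricted to each piece (for ``$\ge$'') with the construction that places near-optimal sets far apart (for the reverse) gives
\[
  e(m)=\min_{k\ge 1}\ \min_{\substack{m_1,\dots,m_k>0\\ m_1+\dots+m_k=m}}\ \sum_{i=1}^{k} e(m_i),
\]
together with the dichotomy: for each $m$, either $e(m)$ is attained by a bounded, connected minimizer, or $e(m)=\sum_i e(m_i)$ for some nontrivial partition. I would make the splitting rigorous using the by-now standard regularity theory for this functional ($\Lambda$-minimality, $C^{1,\gamma}$ regularity of the reduced boundary, essential boundedness of minimizers), as in \cite{knupfer_general} and \cite{bonacini_cristoferi}.

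The hard ingredient is the rigidity statement that \emph{every bounded connected minimizer of $P+\V{\cdot}$ in $\R^2$, at any mass, is a ball}. For small mass this is known (\cite{knupfer_planar}, \cite[Theorem~1.3]{vincent}), and the task is to remove the smallness assumption. The natural attempt is a bootstrap from the small-mass regime: using the quantitative stability of the ball (a uniform positive lower bound on the second variation below the threshold $m_*$, together with the $\Lambda$-minimality regularity estimates, exactly as in the argument sketched for proposition~\ref{prop:stability}), one would enlarge step by step the set of masses on which every minimizer must be a ball; a possible alternative is a symmetrization or calibration argument specific to $n=2$. The main obstacle lies precisely here, and it has a self-referential flavour: rigidity is needed on the interval $(0,m_1]$, but $m_1$ is defined only \emph{a posteriori} by the crossover $f(m_1)=2f(m_1/2)$, so the bootstrap must be shown to extend at least that far. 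This is exactly the open global classification of liquid-drop minimizers in the plane, and the route above is, as of now, only a plausible one.

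Granting the rigidity, $e(m)=f(m)$ on every mass admitting a bounded minimizer. Plugging this into the fragmentation formula and iterating (an optimal fragmentation decomposes, in finitely many steps, into indecomposable pieces, since the isoperimetric bound $e(m_i)\ge P(B[m_i])=2\sqrt{\pi m_i}$ makes arbitrarily many arbitrarily small pieces too expensive), every indecomposable piece is a ball and contributes $f(m_i)$, so
\[
  e(m)=\min_{k\ge 1}\ \min_{\substack{m_1,\dots,m_k>0\\ m_1+\dots+m_k=m}}\ \sum_{i=1}^{k} f(m_i).
\]
It then remains to carry out an elementary scalar optimization. In the plane $f(m)=2\sqrt{\pi}\,m^{1/2}+c_\alpha\,m^{1+\alpha/2}$ with $c_\alpha>0$, hence $k\,f(m/k)=2\sqrt{\pi}\,k^{1/2}m^{1/2}+c_\alpha\,k^{-\alpha/2}m^{1+\alpha/2}$. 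A short computation shows that, for fixed $k$, the sum $\sum f(m_i)$ under $\sum m_i=m$ is minimized at $m_i=m/k$ (equalizing any pair of masses lowers the sum), and that, since $m\mapsto k f(m/k)$ and $m\mapsto (k+1)f(m/(k+1))$ cross exactly once --- at the increasing sequence $m_k$ given by \eqref{eq:critical mass} --- the lower envelope $\min_k k f(m/k)$ equals $k f(m/k)$ precisely on $[m_{k-1},m_k]$. This is the asserted formula. Non-existence of minimizers for $m>m_1$ follows as well: an honest minimizer at such a mass would be a ball by the rigidity, giving $e(m)=f(m)>2f(m/2)\ge e(m)$, a contradiction.

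In summary, the fragmentation formula and the final scalar optimization are essentially routine; the whole difficulty of the conjecture is concentrated in the global rigidity of planar minimizers, for which the bootstrap sketched above is, at present, only a hope.
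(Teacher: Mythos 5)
This statement is a \emph{conjecture} in the paper, not a theorem: the authors offer only numerical evidence for it (the simulations of Section~\ref{subsec:numerical results}), and there is no proof in the paper to compare against. Your write-up correctly recognizes this. You honestly isolate the fragmentation/concentration--compactness structure, the global rigidity of planar minimizers, and a scalar optimization as the three ingredients, and you are right that the middle one --- that \emph{every} bounded connected minimizer of $P+\mathcal{V}_\alpha$ in $\R^2$ is a ball, regardless of mass --- is exactly the open core of the Choksi--Peletier conjecture, with the bootstrap you describe being only a hope. That assessment matches what is actually known.

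One correction, though: the ``elementary scalar optimization'' is \emph{not} routine, and your specific claim that ``equalizing any pair of masses lowers the sum'' is false in general. In the plane $f(m)=a\,m^{1/2}+b\,m^{1+\alpha/2}$ with $a,b>0$ and $\alpha\in(0,2)$, so $f''(m)=-\tfrac{a}{4}m^{-3/2}+b\bigl(1+\tfrac{\alpha}{2}\bigr)\tfrac{\alpha}{2}\,m^{\alpha/2-1}$ tends to $-\infty$ as $m\to 0^+$: $f$ is strictly \emph{concave} near the origin and only becomes convex past an inflection mass $t_0$. Hence for fixed $k$ the constrained minimum of $\sum f(m_i)$ need not be at equal masses; the first-order conditions $f'(m_i)=\lambda$ admit two solutions $a<t_0<b$, and a critical partition can a priori mix two distinct piece sizes, with the degenerate partition $(m,0,\dots,0)$ also competing. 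Showing that the infimum over all partitions nevertheless equals $\min_k k\,f(m/k)$ therefore requires a genuine argument (ruling out mixed partitions via second-order analysis, or otherwise), and is itself part of the conjectural content of Conjecture~\ref{con:only balls1}. Even granting the rigidity of planar minimizers, this piece would remain to be proved; your proposal should not present it as a short computation.

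A second, smaller point: your derivation of $e(m)=\min_k\min_{\text{partition}}\sum f(m_i)$ from the fragmentation formula needs the partitions to terminate in pieces that actually admit minimizers. This is self-referential (you need each $m_i\le m_1$, but $m_1$ is defined through the very formula being proved), so a careful write-up would need to establish that the candidate value $\min_k k f(m/k)$ is achieved by a finite partition into pieces each below the (rigidity-implied) existence threshold, i.e.\ that $m_k\le k\,m_1$ for all $k$. This consistency check is implicit in the conjecture but not free.
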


To get minimizers of \eqref{eq:liquid drop} for different volume constraint, we set the volume constraint to $1$ and add a constant $c_m$ to the term $\mathcal{V}_\alpha$. Indeed, minimizing
\[
  \inf_{E\subset \rn, \abs{E}=1}  P(E)+c_m \V{E}
\]
is equivalent to minimizing \eqref{eq:liquid drop} provided
\begin{equation}\label{eq:calpha}
c_m = m^{\frac{1+\alpha}{n}}.
\end{equation} The choice of $T$ is made so that, if $\1_{B[1]}^N$ is the discretization of the ball of volume $1$ with side step $T/N$, we have
\[ \frac{\vat(\1_{B[1]}^N)-\V{B[1]}}{\V{B[1]}}\leq 1\%.\]
Meanwhile, given the number of discretization points $N=2^{11}$, we can't increase $T$ too much, otherwise the discretization of candidate minimizers is less and less precise.

For instance, for $\alpha=1$ and $n=2$, we have
\begin{itemize}
  \item for $T=5\pi$: $\frac{\vat(\1_B^N)-\V{B[1]}}{\V{B[1]}}\simeq 0.08$,
  \item for $T=10\pi$: $\frac{\vat(\1_B^N)-\V{B[1]}}{\V{B[1]}}\simeq 0.04$,
  \item for $T=20\pi$: $\frac{\vat(\1_B^N)-\V{B[1]}}{\V{B[1]}}\simeq 0.01$.
\end{itemize}
These numerical estimates lead us to chose $T=20\pi$. See appendix \ref{ap:computation} for the method used to compute $\V{B[1]}$.
%julia> SpectralOpt.vαb2(1,1e-6,1e-2)
%(16.756374301484424,0.009999999888148576)

%Pour les images $\alpha=1$, erreur relative sur le périmètre pour une boule, et deux boules : 0.004488714952017912
% et 0.009944010643929696
We display the results obtained for $\alpha=1$, and $c_m=1.5,1.6$ in figure \ref{fig:1.5 and 1.6}.
\begin{figure}[h]
\subfloat[$c_m=1.5$]{\includegraphics[scale=0.42]{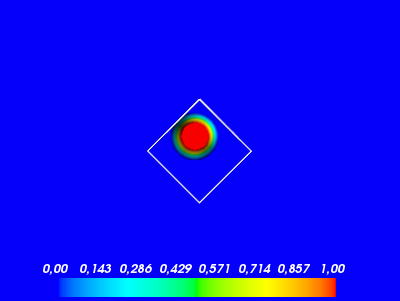}\label{fig:1.5}}
\quad
\subfloat[$c_m=1.6$]{\includegraphics[scale=0.42]{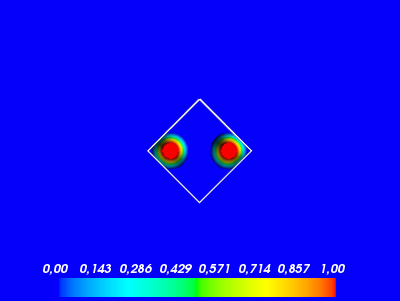}\label{fig:1.6}}
\caption{$\alpha=1$, $A=0$, $\Omega$ is a square.}
\label{fig:1.5 and 1.6}
\end{figure}

Here the box $\Omega$ in which all functions are supported (see subsection \ref{subsec:method}) has been chosen to be a square of diagonal length $\pi$ (and is represented by white lines). We emphasize that this box is needed to get the right minimizers, both theoretically and numerically. Theoretically, the condition that functions are supported in a fixed bounded set is needed for the compactness property (C) (again see subsection \ref{subsec:method}) to be satisfied, both in \emph{step one} and in \emph{step two}, as we let the size of the domain $T$ go to infinity. Numerically, without this box, for $c_m=1.5$, simulations yield two balls (instead of one as shown on figure \ref{fig:1.5}) that get further and further away from each other as $T$ increases. But this configuration \emph{does not} converge to an admissible candidate, so it definitely doesn't converge to a minimizer.

We observe that for $c_m=1.6$, we get two balls in opposite corners of the square $\Omega$: it is consistent with the expected repulsive behaviour of the non-local term $\mathcal{V}_\alpha$.
Moreover, using \eqref{eq:critical mass} and \eqref{eq:calpha}, we find that, if conjecture \ref{con:only balls 2} is true, there must be a minimizer up to $c_m \simeq 1.67$. Numerically, we find that there is a minimizer up to a constant $c_m \in(1.5,1.6)$, which is relatively close to $1.67$.
We also observe similar results for different values of $\alpha$, including in the near field-dominated regime $\alpha <1$.

For $\Omega$ a disk of diameter $\pi$, if one increases further $c_m$, we get three balls located near the boundary of $\Omega$, as shown in figure \ref{fig:3.0} for $c_m=3.0$. This is consistent with the conjecture that the energy is minimized by balls of the same volume. To illustrate the effect of the confining potential, we display in figure \ref{fig:confining_potential} the minimizer for $c_m=3.0$, $A=1$ and $\beta=16$.

\begin{figure}[h]
\subfloat[$A=0$]{\includegraphics[scale=0.42]{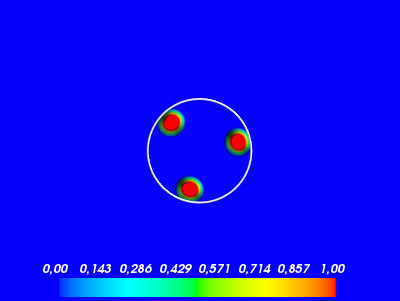}\label{fig:3.0}}
\quad
\subfloat[$A=1$]{\includegraphics[scale=0.42]{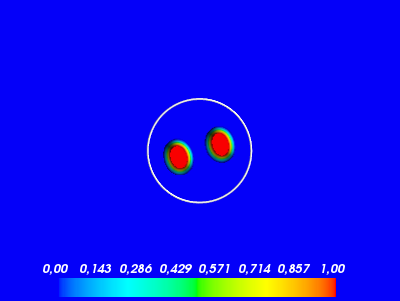}\label{fig:confining_potential}}
\caption{$\alpha=1$, $c_m=3.0$, $\beta=16$, $\Omega$ is a disk.}
\label{fig:1.5 and 1.6}
\end{figure}

Finally, let us mention that the number of discretization points is $N=2^{11}$ in each direction. Numerical minimization is made using the solver IPOPT \cite{ipopt}. The computation time on a standard computer is about an hour.
%%%%%%%%%%%%%%%%%%%%%%%%%%%%%%%%%%%%%%%%%%%%%%%%%%%%%%%%%%%%%%%%%%%%%%%%%%
%%%%%%%%%%%%%%%%%%%%%%%%%%%%%%%%%%%%%%%%%%%%%%%%%%%%%%%%%%%%%%%%%%%%%%%%%%

\appendix
\section{Appendix: Study of the sets $\widetilde{S}_*$ and $S_*$ from the proof of proposition \ref{prop:stability}}\label{ap:variation}
In this appendix, we give explicit forms of the sets $\widetilde{S}_*$ and $S_*$, needed in the proof of proposition \ref{prop:stability}.
Let us define, for all $k \geq 2$, a function $f_k : (0,\infty) \to \R$ by
\[f_k(\gamma)=1-\gamma^{1+\alpha}\frac{\mu_k^\alpha-\mu_1^\alpha}{\lambda_k-\lambda_1} +\gamma^{1+\beta} \frac{A \beta}{\lambda_k-\lambda_1}.\]
Then the sets  $\widetilde{S}_*$ and $S_*$ from the proof of proposition \ref{prop:stability} are defined by
\[\widetilde{S}_*=\{\forall k\geq 2,f_k >0\} \quad \text{and} \quad S_*=\{\forall k \geq 2,f_k \geq 0\}.\]
As stated in \cite[equations (7.4), (7.5) and (7.6)]{vincent}, we have
\begin{align}\label{eq:mukalpha}
\mu_k^{\alpha}=\begin{cases}
			\frac{2^{1+\alpha}\pi^{ \frac{n-1}{2}}}{1-\alpha}\frac{\Gamma(\frac{1+\alpha}{2})}{\Gamma(\frac{n-\alpha}{2})}\left(\frac{\Gamma(k+\frac{n-\alpha}{2})}{\Gamma(k+\frac{n-2+\alpha}{2})}- \frac{\Gamma(\frac{n-\alpha}{2})}{\Gamma(\frac{n-2+\alpha}{2})}\right) &\text{if }\alpha \in (0,1), \\
			2^{\alpha}\pi^{ \frac{n-1}{2}}\frac{\Gamma(\frac{\alpha-1}{2})}{\Gamma(\frac{n-\alpha}{2})}\left( \frac{\Gamma(\frac{n-\alpha}{2})}{\Gamma(\frac{n-2+\alpha}{2})}-\frac{\Gamma(k+\frac{n-\alpha}{2})}{\Gamma(k+\frac{n-2+\alpha}{2})}\right) &\text{if } \alpha \in (1,n), \\
			\frac{4\pi^{ \frac{n-1}{2}}}{\Gamma(\frac{n-1}{2})}\left(\frac{\Gamma'(k+\frac{n-1}{2})}{\Gamma(k+\frac{n-1}{2})}- \frac{\Gamma'(\frac{n-1}{2})}{\Gamma(\frac{n-1}{2})}\right) &\text{if }\alpha =1. \\
		      \end{cases}
\end{align}
Recall also that for any $k \geq 0$, $\lambda_k=k(n+k-2)$. Now let us treat each case enumerated in proposition \ref{prop:stability} separately.

\emph{Case \ref{item:alpha>beta}: $\alpha>\beta$.} A simple study of the sign of $f_k'$ shows that each $f_k$ is increasing from $0$ to a point $\gamma_k$, then decreasing from $\gamma_k$ to $+\infty$. What is more $f_k(0)=1$ and $\lim_{+\infty}f_k = -\infty$, so each $f_k$ has exactly one zero and is positive left of this zero and negative right of it. At last, for any constant $K>0$, $f_k(\gamma)\tend{k}{\infty}1$ uniformly in $\gamma\leq K$. Putting these facts together shows that the sets $\widetilde{S}_*$ and $S_*$ have the forms:
\[\widetilde{S}_*=(0,m_*) \quad  \text{and} \quad  S_*=(0,m_*],\]
for some critical mass $m_*>0$ (depending on $n$, $\alpha$, $\beta$ and $A$).

\emph{Case \ref{item:alpha=beta}: $\alpha=\beta$.} For any $k$, $f_k$ is either decreasing or increasing or constant (depending on the size of $A$). If none of them is decreasing, then for any $k$, $f_k\geq f_k(0)=1$, so
\[\widetilde{S}_*=S_*=(0,+\infty).\]
Otherwise the same arguments as in \emph{case one} shows again that
\[\widetilde{S}_*=(0,m_*) \quad  \text{and} \quad  S_*=(0,m_*],\]
for some critical mass $m_*>0$ (depending on $n$, $\alpha$, $\beta$ and $A$).

\emph{Cases \ref{item:alpha<beta>1}, \ref{item:alpha<beta<1} and \ref{item:alpha<beta=1}: $\alpha<\beta$.} A simple study of the sign of $f_k'$ shows that each $f_k$ is decreasing from $0$ to a point $\gamma_k$, then increasing from $\gamma_k$ to $+\infty$, and we have:
\begin{equation}\label{eq:argminfk}
\gamma_k=\left( \frac{1+\alpha}{A \beta(1+\beta)}(\mu_k^\alpha-\mu_1^\alpha)\right)^{ \frac{1}{\beta-\alpha}}.
\end{equation}
Another simple computation shows that
\begin{equation}\label{eq:minfk}
\min f_k = f_k(\gamma_k) = 1-\left( \frac{1+\alpha}{A \beta(1+\beta)}\right)^{ \frac{1+\alpha}{\beta-\alpha}}\frac{\beta-\alpha}{1+\beta}\frac{(\mu_k^\alpha-\mu_1^\alpha)^{ \frac{1+\beta}{\beta-\alpha}}}{\lambda_k-\lambda_1}.
\end{equation}
We must treat the subcases $\alpha>1$, $\alpha=1$ and $\alpha<1$ separately.
\newline\emph{Subcase one: $\alpha>1$}. We use the following classical Stirling formula:
\[\Gamma(x) \underset{x\to +\infty}{\sim} \sqrt{ {\frac{2\pi}{x}}}\left( \frac{x}{e}\right)^x,\]
to find that
\[ \frac{\Gamma(k+\frac{n-\alpha}{2})}{\Gamma(k+\frac{n-2+\alpha}{2})} \underset{k\to \infty}{\sim} k^{1-\alpha}.\]
With \eqref{eq:mukalpha}, this means that the sequence $(\mu_k^\alpha)$ is bounded. As $\lambda_k \tend{k}{\infty}\infty$, we get from \eqref{eq:minfk}
\[\min f_k \tend{k}{\infty}1.\]
Thus there exists an index $k_0$ such that
\begin{equation}
\widetilde{S}_*=\bigcap_{k=2}^{k_0}\{f_k>0\} \quad \text{and} \quad S_*=\bigcap_{k=2}^{k_0}\{f_k\geq 0\}.
\end{equation}
As for any $k$, $\lim_{+\infty} f_k = +\infty$, we get that $\widetilde{S}_*$ and $S_*$ both contain an unbounded interval, which is what we wanted.
\newline\emph{Subcase two: $\alpha=1$.} We use the classical asymptotics of the digamma function $\frac {\Gamma'}{\Gamma}$:
\[\frac {\Gamma'}{\Gamma}(x) \underset{x\to +\infty}{\sim} \ln(x), \]
to find that, according to \eqref{eq:minfk},
\[ \min f_k \tend{k}{+\infty} 1.\]
We conclude as above.
\newline\emph{Subcase three: $\alpha<1$.} Once again we use the asymptotics
\[ \frac{\Gamma(k+\frac{n-\alpha}{2})}{\Gamma(k+\frac{n-2+\alpha}{2})} \underset{k\to \infty}{\sim} k^{1-\alpha},\]
to find that
\begin{equation}\label{eq:asymptotics}
  \frac{(\mu_k^\alpha-\mu_1^\alpha)^{\frac{1+\beta}{\beta-\alpha}}} {\lambda_k-\lambda_1} \underset{k\to \infty}{\sim} \frac{(k^{1-\alpha})^{\frac{1+\beta}{\beta-\alpha}}}{k^2} = k^{\frac{(1-\beta)(1+\alpha)}{\beta-\alpha}}.
\end{equation}
If $\beta>1$, once again we have
\[ \min f_k \tend{k}{+\infty} 1,\]
and we conclude as above. If $\beta<1$, then we have
\begin{equation}\label{eq:minfkinfini}
  \min f_k \tend{k}{+\infty} -\infty
\end{equation}
Also, by definition we have
\[ f_{k+1}(\gamma_k)-1 = -\gamma_k^{1+\alpha}\frac{\mu_{k+1}^\alpha-\mu_1^\alpha}{\lambda_{k+1}-\lambda_1} +\gamma_k^{1+\beta} \frac{A \beta}{\lambda_{k+1}-\lambda_1}.\]
As the sequence $(\mu_k^\alpha)$ is increasing we get
\begin{align}
  f_{k+1}(\gamma_k)-1 &\leq -\gamma_k^{1+\alpha}\frac{\mu_{k}^\alpha-\mu_1^\alpha}{\lambda_{k+1}-\lambda_1} +\gamma_k^{1+\beta} \frac{A \beta}{\lambda_{k+1}-\lambda_1} \nonumber\\
                      & = (f_k(\gamma_k)-1)\frac{\lambda_k-\lambda_1}{\lambda_{k+1}-\lambda_1} \nonumber\\
                      & \tend{k}{\infty}-\infty. \label{eq:fkgk}
\end{align}
With \eqref{eq:minfkinfini}, this means that
\[ [\gamma_k,\gamma_{k+1}]\subset (\widetilde{S}_*)^\mathsf{c}\cap (S_*)^\mathsf{c}.\]
What is more, from \eqref{eq:argminfk}, we have $\gamma_k \tend{k}{+\infty} +\infty$, so $\widetilde{S}_*$ and $S_*$ are both bounded, which is what we wanted. At last, if $\beta=1$ we find using $\eqref{eq:asymptotics}$ that there exists a constant $C_\alpha$ such that
\[ \min f_k \tend{k}{\infty}1-\frac{C_\alpha}{A^{\frac{1+\alpha}{\beta-\alpha}}}.\]
For $A^{\frac{1+\alpha}{\beta-\alpha}} > C_\alpha$, we conclude as in \emph{case one}. For $A^{\frac{1+\alpha}{\beta-\alpha}} < C_\alpha$, we conclude as above that $\widetilde{S}_*$ and $S_*$ are both bounded. For $A^{\frac{1+\alpha}{\beta-\alpha}} < C_\alpha$, we have to use the following more precise form of Stirling's approximation: \[\Gamma(x) \underset{x\to +\infty}{\sim} \sqrt{ {\frac{2\pi}{x}}}\left( \frac{x}{e}\right)^x(1+O(\frac 1x)).\]
Proceeding to simple asymptotic expansions, we find that for $k$ large enough we have
\[ \min f_k >0.\]
We conclude as in \emph{case one}.

\section{Computation of $\V{B[1]}$}\label{ap:computation}
Here we explain how we compute $\V{B[1]}$ numerically, as needed in subsection \ref{subsec:numerical results} to choose the value of $T$. In order to compute numerically the improper integral
\[\V{B[1]} = \int_{B[1]\times B[1]} \frac{\mathrm{d}x\mathrm{d}y}{\abs{x-y}^{2-\alpha}}, \]
we add a small term $\epsilon>0$ to the denominator of the integrand. So we compute
\[\mathcal{V}_{\alpha,\epsilon}(B[1])=\int_{B[1]\times B[1]}\frac{\mathrm{d}x\mathrm{d}y}{\abs{x-y}^{2-\alpha}+\epsilon}.\]
To control the error introduced by the parameter $\epsilon$, we need to estimate the difference $\Delta_\epsilon := \V{B[1]} - \mathcal{V}_{\alpha,\epsilon}(B[1])$. We have
\begin{align*}
  \Delta_\epsilon & =\int_{B[1]\times B[1]} \frac{\mathrm{d}x\mathrm{d}y}{\abs{x-y}^{2-\alpha}}-\int_{B[1]\times B[1]}\frac{\mathrm{d}x\mathrm{d}y}{\abs{x-y}^{2-\alpha}+\epsilon} \\
  & =\int_{B[1]\times B[1]}\frac{\epsilon \mathrm{d}x\mathrm{d}y} {\abs{x-y}^{2-\alpha}(\abs{x-y}^{2-\alpha}+\epsilon)} \\
  & \leq \int_{B[1]\times B[1]}\1_{\abs{x-y}<r}\frac{\mathrm{d}x\mathrm{d}y}{\abs{x-y}^{2-\alpha}}+\int_{B[1]\times B[1]}\1_{\abs{x-y}\geq r}\frac{\epsilon\mathrm{d}x\mathrm{d}y}{\abs{x-y}^{2-\alpha}r^{2-\alpha}} \\
  & \leq \int_{B[1]\times \R^2}\1_{\abs{x-y}<r}\frac{\mathrm{d}x\mathrm{d}y}{\abs{x-y}^{2-\alpha}} + \frac{\epsilon}{r^{2-\alpha}}\int_{B[1]\times B[1]}\frac{\mathrm{d}x\mathrm{d}y}{\abs{x-y}^{2-\alpha}} \\
  & \leq \int_{B[1]\times \R^2}\1_{\abs{y}<r}\frac{\mathrm{d}x\mathrm{d}y}{\abs{y}^{2-\alpha}} + \frac{\epsilon}{r^{2-\alpha}} \int_{B[1]}\int_{B[1]}\frac{\mathrm{d}x\mathrm{d}y}{\abs{y}^{2-\alpha}} \\
  & = \int_{\abs{y}<r}\frac{\mathrm{d}y}{\abs{y}^{2-\alpha}} + \frac{\epsilon}{r^{2-\alpha}}\int_{B[1]}\frac{\mathrm{d}y}{\abs{y}^{2-\alpha}} \\
  & = 2\pi\int_0^r \frac{r^{2-1}\mathrm{d}r}{r^{2-\alpha}}+\frac{\epsilon}{r^{2-\alpha}}2\pi\int_0^{\frac{1}{\sqrt{\pi}}} \frac{r^{2-1}\mathrm{d}r}{r^{2-\alpha}} \\
  & = \frac{2\pi}{\alpha}(r^\alpha+\frac{\epsilon }{r^{2-\alpha}\pi^{\frac{\alpha}{2}}}),
\end{align*}
for some $r>0$. This last bound attains its minimum for $r=\left(\frac{(2-\alpha)\epsilon}{\alpha \pi^{\frac{\alpha}{2}}}\right)^{\frac{1}{2}}$. From there we deduce
\begin{equation*}
  \Delta_\epsilon \leq \frac{2\pi}{\alpha}\frac{2}{2-\alpha}\left(\frac{2-\alpha}{\alpha}\right)^{\frac{\alpha}{2}}\left(\frac{\epsilon}{\pi^{\frac{\alpha}{2}}}\right)^{\frac{\alpha}{2}}.
\end{equation*}
With $\alpha=1$, we get
\begin{equation*}
  \Delta_\epsilon \leq 4 \pi^{\frac{3}{4}}\sqrt{\epsilon}.
\end{equation*}
Now the proper integral $\mathcal{V}_{\alpha,\epsilon}(B[1])$ can be expressed in polar coordinates, and computed with arbitrary precision in the Julia language, using the \emph{HCubature} package.
\bibliographystyle{plain}
\bibliography{biblio}

\begin{thebibliography}{10}

\bibitem{alama2017droplet}
Stan Alama, Lia Bronsard, Rustum Choksi, and Ihsan Topaloglu.
\newblock Droplet breakup in the liquid drop model with background potential.
\newblock {\em arXiv preprint arXiv:1708.04292}, 2017.

\bibitem{alama2017ground}
Stan Alama, Lia Bronsard, Rustum Choksi, and Ihsan Topaloglu.
\newblock Ground-states for the liquid drop and tfdw models with long-range
  attraction.
\newblock {\em Journal of Mathematical Physics}, 58(10):103503, 2017.

\bibitem{bonacini_cristoferi}
Marco Bonacini and Riccardo Cristoferi.
\newblock {Local and global minimality results for a nonlocal isoperimetric
  problem on $\R^N$}.
\newblock {\em SIAM Journal on Mathematical Analysis}, 46(4):2310--2349, 2014.

\bibitem{gamma_convergence}
Andrea Braides.
\newblock {\em Gamma-convergence for Beginners}, volume~22.
\newblock Clarendon Press, 2002.

\bibitem{diblock}
Rustum Choksi and Mark~A Peletier.
\newblock Small volume-fraction limit of the diblock copolymer problem: Ii.
  diffuse-interface functional.
\newblock {\em SIAM Journal on Mathematical Analysis}, 43(2):739--763, 2011.

\bibitem{vincent}
Alessio Figalli, Nicola Fusco, Francesco Maggi, Vincent Millot, and
  Massimiliano Morini.
\newblock Isoperimetry and stability properties of balls with respect to
  nonlocal energies.
\newblock {\em Communications in Mathematical Physics}, 336(1):441--507, 2015.

\bibitem{nonexistence3d}
Rupert~L Frank, Rowan Killip, and Phan~Th{\`a}nh Nam.
\newblock Nonexistence of large nuclei in the liquid drop model.
\newblock {\em Letters in Mathematical Physics}, 106(8):1033--1036, 2016.

\bibitem{Giusti}
Enrico Giusti.
\newblock {\em Minimal surfaces and functions of bounded variation}, volume~80.
\newblock Birkhauser Verlag, 1984.

\bibitem{knupfer_planar}
Hans Kn{\"u}pfer and Cyrill~B Muratov.
\newblock On an isoperimetric problem with a competing nonlocal term i: The
  planar case.
\newblock {\em Communications on Pure and Applied Mathematics},
  66(7):1129--1162, 2013.

\bibitem{analysis_lieb}
Elliott~H Lieb and Michael Loss.
\newblock Analysis, volume 14 of graduate studies in mathematics.
\newblock {\em American Mathematical Society, Providence, RI,}, 4, 2001.

\bibitem{nonexistence_lu_otto}
Jianfeng Lu and Felix Otto.
\newblock Nonexistence of a minimizer for thomas--fermi--dirac--von
  weizs{\"a}cker model.
\newblock {\em Communications on pure and applied mathematics},
  67(10):1605--1617, 2014.

\bibitem{knupfer_general}
Cyrill Muratov and Hans Kn{\"u}pfer.
\newblock On an isoperimetric problem with a competing nonlocal term ii: The
  general case.
\newblock {\em Communications on Pure and Applied Mathematics},
  67(12):1974--1994, 2014.

\bibitem{singular}
Elias~M Stein.
\newblock {\em Singular integrals and differentiability properties of functions
  (PMS-30)}, volume~30.
\newblock Princeton university press, 2016.

\bibitem{ipopt}
A.~Wächter and L.~T. Biegler.
\newblock On the implementation of a primal-dual interior point filter line
  search algorithm for large-scale nonlinear programming.
\newblock {\em Mathematical Programming}, 106(1):25--27, 2006.

\end{thebibliography}

\bigskip\noindent
François Générau:
Laboratoire Jean Kuntzmann (LJK),
Universit\'e Joseph Fourier\\
Bâtiment IMAG, 700 avenue centrale,
38041 Grenoble Cedex 9 - FRANCE\\
{\tt francois.generau@univ-grenoble-alpes.fr}\\

\bigskip\noindent
Edouard Oudet:
Laboratoire Jean Kuntzmann (LJK),
Universit\'e Joseph Fourier\\
Bâtiment IMAG, 700 avenue centrale,
38041 Grenoble Cedex 9 - FRANCE\\
{\tt edouard.oudet@univ-grenoble-alpes.fr}\\
{\tt http://www-ljk.imag.fr/membres/Edouard.Oudet/}

\end{document}